\crefname{equation}{Eq.}{Eqs.}
\newtheorem{theorem}{Theorem}[section]
\newtheorem{definition}[theorem]{Definition}
\newtheorem{proposition}[theorem]{Proposition}
\newtheorem{corollary}[theorem]{Corollary}
\newtheorem{lemma}[theorem]{Lemma}
\newtheorem{remark}[theorem]{Remark}
\newtheorem{example}[theorem]{Example}
\newcommand{\id}{{\rm Id}}
\newcommand{\rank}{\operatorname{rank}}
\newcommand{\tr}{{\operatorname{tr}}}
\newcommand{\Lip}{{\rm Lip}}
\renewcommand{\d}{{\rm d}}
\newcommand{\pp}{{\mathbb P}}
\newcommand{\ee}{{\mathbb E}}
\newcommand{\ppch}{\pp^{\mathrm{ch}}}
\newcommand{\eech}{\ee^{\mathrm{ch}}}
\newcommand{\rr}{{\mathbb R}}
\newcommand{\nn}{{\mathbb N}}
\newcommand{\cc}{{\mathbb C}}
\renewcommand{\P}{{\mathrm P}}
\newcommand{\proj}{\pi}
\newcommand{\supp}{\operatorname{supp}}
\newcommand{\irr}{{\hyperlink{irr}{\bf($\phi$-Erg)}}}
\newcommand{\pur}{{\hyperlink{pur}{\bf(Pur)}}}
\newcommand{\inv}{_{\mathrm{inv}}}
\newcommand{\dist}{d}
\newcommand{\lps}{\langle}
\newcommand{\rps}{\rangle}
\newcommand{\as}{\operatorname{-}\mathrm{a.s.}}
\renewcommand{\ae}{\operatorname{-}\mathrm{a.e.}}
\newcommand\pt[1]{^{\otimes #1}}
\newcommand{\sta}{\P(\cc^k)}
\newcommand{\staalg}{\mathcal{B}}
\newcommand{\out}{\Omega}
\newcommand{\outalg}{\mathcal{O}}
\newcommand{\joint}{\sta\times \out}
\newcommand{\jointalg}{\mathcal{J}}
\begin{document}

\title{Invariant Measure for Quantum Trajectories}

\author{T. Benoist}
\address{Institut de Math\'ematiques de Toulouse, \'Equipe de Statistique et Probabilit\'es,
Universit\'e Paul Sabatier, 31062 Toulouse Cedex 9, France}
\email{tristan.benoist@math.univ-toulouse.fr}
\author{M. Fraas}
\address{Instituut voor Theoretische Fysica, KU Leuven, Belgium}
\email{martin.fraas@gmail.com}
\author{Y. Pautrat}
\address{Laboratoire de Math\'ematiques d'Orsay, Univ. Paris-Sud, CNRS, Universit\'e Paris-Saclay, 91405 Orsay, France}
\email{yan.pautrat@math.u-psud.fr}
\author{C. Pellegrini}
\address{Institut de Math\'ematiques de Toulouse, \'Equipe de Statistique et Probabilit\'es,
Universit\'e Paul Sabatier, 31062 Toulouse Cedex 9, France}
\email{clement.pellegrini@math.univ-toulouse.fr}

\subjclass[2000]{}
\keywords{}

\begin{abstract}
We study a class of Markov chains that model the evolution of a
quantum system subject to repeated measurements. Each Markov chain in this
class is defined by a measure on the space of matrices. It
is then given by a random product of correlated matrices taken from
the support of the defining measure. We give natural conditions on
this support that imply that the Markov chain admits a unique invariant
probability measure. We moreover prove the geometric convergence
towards this invariant measure in the Wasserstein metric.  Standard
techniques from the theory of products of random matrices cannot be
applied under our assumptions, and new techniques are developed, such as maximum likelihood-type estimations.
\end{abstract}

\date{\today}

\maketitle

\tableofcontents
\section{Introduction} \label{sec_intro}
We consider a complex vector space $\mathbb{C}^k$ and its projective space $\P(\mathbb{C}^k)$ equipped with its Borel $\sigma$-algebra $\staalg$. 
For a non zero vector $x\in\mathbb C^k$, we denote $\hat x$ the corresponding equivalence class of $x$ in $\P(\mathbb C^k)$.
For a linear map  $v\in M_k(\mathbb C)$ we denote $v\cdot \hat{x} $ the element of the projective space represented by $v\,x$ whenever $v\,x\neq 0$. We equip $M_k(\mathbb C)$ with its Borel $\sigma$-algebra and let $\mu$ be a measure on $M_k(\mathbb C)$ with a finite second moment, $\int_{M_k(\mathbb C)} \|v\|^2\,\d\mu(v)<\infty$, that satisfies the stochasticity condition
\begin{equation}
\label{eq:stochastic family}
\int_{M_k(\mathbb C)} v^* v \,\mathrm{d} \mu(v) = \id_{\cc^k}. 
\end{equation}

In this article we are interested in particular Markov chains $(\hat{x}_n)$  on $\P(\mathbb{C}^k)$, defined by 
\[
\hat x_{n+1}=V_n\cdot \hat x_{n},
\]
where $V_n$ is a $M_k(\mathbb C)$-valued random variable with a probability density $ ||v x_n||^2/||x_n||^2 \mathrm{d} \mu(v).$ More precisely, such a Markov chain is associated with the transition kernel given for a set $S\in\staalg$ and $\hat x\in \P(\mathbb C^k)$ by
\begin{equation} \label{eq_deftranskernel}
\Pi(\hat x,S)=\int_{M_k(\cc)} \mathbf{1}_{S}(v\cdot \hat x)  \|v x\|^2 \mathrm{d} \mu(v),
\end{equation}
where $x$ is an arbitrary normalized vector representative of $\hat{x}$. Note that the normalization condition \eqref{eq:stochastic family} imposed on $\mu$ is equivalent to the conservation of probability,  $\Pi\big(\hat x, \P(\mathbb{C}^k)\big) =1$. We are interested in the large-time distribution of $(\hat x_n)$.

Note that $\hat x_n$ can be written as $$\hat x_n = V_{n}\ldots V_{1}\cdot \hat x_0$$ so that the study of $\hat x_n$ can be formulated in terms of random products of matrices. Markov chains associated to random products of matrices were studied in a more general setting where the weight appearing in the transition kernel \eqref{eq_deftranskernel} is proportional to $\|v x\|^s$ for some $s \geq 0$, instead of $\|v x\|^2$. The classical case of products of independent, identically distributed random matrices pioneered by Kesten, Furstenberg and Guivarc'h corresponds to $s = 0$. In that case, for i.i.d. invertible random matrices $Y_1,Y_2,\ldots$, denoting $S_n=Y_n\ldots Y_1$, one is usually interested in the asymptotic properties of
$$\log\| S_n x\|,$$ for any $x\neq 0$. In particular, a law of large numbers, a central limit theorem and a large deviation principle have been obtained for this quantity, under contractivity and strong irreducibility assumptions \cite{Furstenberg, GuRa85, LePage}. Such results are closely linked to the uniqueness of the invariant measure of the Markov chain 
$$\hat x_n=S_n\cdot \hat x.$$  
These results were generalized to the case $s >0$ in \cite{Guivarch2016}. Our framework corresponds to the case $s=2$; in this case, and with the additional assumption \eqref{eq:stochastic family}, we provide a new method to study this Markov chain, and use it to derive the above results without assuming invertibility of the matrices, and with an optimal irreducibility assumption. We compare our approach with respect to that of \cite{Guivarch2016} at the end of this section.

The method that we employ is motivated by an interpretation of this process as statistics of a quantum system being repeatedly indirectly measured. Let us expand on this as we introduce more notation and terminology. The set of states of a quantum system described by a finite dimensional Hilbert space $\cc^k$ is the set of density matrices $\mathcal D_k:=\{\rho\in M_k(\cc)\ |\ \rho\geq 0,\ \tr\,\rho=1\}$. This set is convex and the set of its extreme points is called the set of pure states. This latter  set is in one to one correspondence with the projective space $\sta$ by the bijection $\sta\ni\hat x\mapsto \proj_{\hat x}\in\mathcal D_k$ with $\proj_{\hat x}$ the orthogonal projector on the corresponding ray in $\cc^k$.

The time evolution of the system conditioned on a measurement outcome is encoded in a matrix $v$ that updates the state of the system. The support of $\mu$ is endowed with the meaning of the possible updates, and the system is updated according to $v$ with a probability density $\tr(v \rho v^*)\, \d \mu(v)$. Given $v$, a state $\rho$ is mapped to a state $v \rho v^*/\tr(v \rho v^*)$. Iterating this procedure defines a random sequence $(\rho_n)$ in $\mathcal D_k$ called a quantum trajectory: after $n$ measurements with resulting matrices $v_1,\dots,v_n$ the state of the system becomes 
\begin{equation}\label{qu:tr} \rho_n=\frac{v_{n}\ldots v_{1}\rho_0 v_{1}^* \ldots v_{n}^*}{\tr(v_{n}\ldots v_{1}\rho_0 v_{1}^* \ldots v_{n}^*)} 
\end{equation}
where $(v_1,\ldots,v_n)$ has probability density $\tr(v_{n} \dots v_{1} \rho_0 v_{1}^*\ldots v_{n}^*)\,\d\mu^{\otimes n}(v_1,\ldots,v_n)$. In other words, the process \Cref{qu:tr}  describes an evolution of a repeatedly measured  quantum system.

A key result in the theory of quantum trajectories is a purification theorem obtained by K\"ummerer and Maassen \cite{Maassen} showing  that quantum trajectories $(\rho_n)$ defined on $\mathcal{D}_k$ almost surely approach the set of pure states, the extreme points of $\mathcal{D}_k$ if and only if the following purification condition is satisfied:
\hypertarget{pur}{}
\begin{description}
\item[\pur] Any orthogonal projector $\proj$ such that for any $n\in\nn$, $\proj v_1^*\ldots v_n^* v_n\ldots v_1 \proj \propto \proj$  for $\mu^{\otimes n}$-almost all $(v_1,\ldots,v_n)$, is of rank one
\end{description}
(we write $X \propto Y$ for $X,Y$ two operators if there exists $\lambda\in \cc$ such that $X=\lambda Y$).

Under this assumption, the long-time behavior of the Markov chain is essentially dictated by its form on the set of pure states, i.e.\ for $\rho_0=\proj_{\hat x_0}$. It is an immediate observation that
\begin{equation} \label{eq_xnrhoncorresp}
\tr(v\proj_{\hat x_0}v^*) = \|v x_0\|^2, \quad
\frac{v\proj_{\hat x_0} v^*}{\tr(v\proj_{\hat x_0}v^*)} = \proj_{v\cdot \hat x_0},
\end{equation}
for all $v\in M_k(\mathbb C)$. This way our Markov chain $(\hat x_n)$ corresponds to the quantum trajectory $(\rho_n)$ described above when $\rho_0$ is a pure state $\proj_{\hat x_0}$.

Although ideas underlying our method are based on the connection of $(\hat x_n)$, with this physical problem, we will not explicitly use it in the paper. The notion of quantum trajectory originates in quantum optics \cite{carmichael}, and Haroche's Nobel prize winning experiment \cite{Haroche} is arguably the most prominent  example of a system described by the above formalism. The reader interested in the involved mathematical structures might consult for example the review book \cite{Holevo} or the pioneering article \cite{Maassen}.
\smallskip

We will show that under condition \pur, the set of all invariant measures of the Markov chain \eqref{qu:tr} can be completely classified, depending on the operator $\phi$ on $\mathcal{D}_k$ describing the average evolution:
\begin{equation}\label{channel}
\phi(\rho)=\int_{M_k(\mathbb C)} v \rho v^* \, \d\mu(v).
\end{equation}
The map $\phi$ on $\mathcal D_k$ is completely positive and trace-preserving.\footnote{Complete positivity is stronger than positivity; namely $\phi$ is completely positive if $\phi\otimes \id_{M_n(\cc)}$ is positive for all $n\in\nn$.} Such a map is often called a quantum channel (see e.g.\  \cite{wolftour}). %HeZi12,Pe08
It has in particular the property of mapping states to states. Brouwer's fixed point Theorem shows that there exists an invariant state, i.e.\ $\rho\in \mathcal D_k$ such that $\phi(\rho)=\rho$. A necessary and sufficient algebraic condition for uniqueness of this invariant state is (see e.g.\ \cite{EHK,wolftour,CarbonePautrat})
\hypertarget{irr}{}
\begin{description}
\item[\irr] There exists a unique minimal non trivial $\operatorname{supp} \mu$-invariant subspace $E$ of $\cc^k$.
\end{description}
 If \irr\ holds with $E=\cc^k$, then $\phi$ is said irreducible. We chose the name \irr\ to avoid confusion with the notion of irreducibility for Markov chains. 
We moreover emphasize that we call this assumption \irr\ because it relies only on $\phi$ and not on the different operators $v$ in the support of $\mu$: an equivalent statement of \irr\ is that there exists a unique minimal nonzero orthogonal projector $\proj$ such that $\phi(\proj)\leq \lambda \proj$ for some $\lambda \geq 0$ (see e.g.\ \cite{schrader}).
\medskip

We now state the main result of the paper:
\begin{theorem}\label{thm:uniqueness}
Assume that $\mu$ satisfies assumptions \pur\ and \irr. Then, the transition kernel $\Pi$ has a unique invariant probability measure $\nu\inv$ and there exists $m\in\{1,\ldots,k\}$, $C>0$ and $0<\lambda<1$ such that for any probability measure $\nu$ over $\big(\sta,\mathcal B\big)$,
\begin{equation}\label{eq:conv with period}
W_1\left(\frac1m\sum_{r=0}^{m-1} \nu\Pi^{mn+r}, \nu\inv\right)\leq C \lambda^n,
\end{equation}
where $W_1$ is the Wasserstein metric of order $1$.
\end{theorem}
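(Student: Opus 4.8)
The plan is to combine a soft existence argument with a quantitative merging estimate coming from purification, and to read off the period $m$ and rate $\lambda$ from the peripheral spectrum of $\phi$. First I would settle existence: since $\sta$ is compact and the second moment bound together with dominated convergence makes $\hat x\mapsto \Pi f(\hat x)=\int f(v\cdot\hat x)\|vx\|^2\,\d\mu(v)$ continuous for every $f\in C(\sta)$ (the weight $\|vx\|^2$ kills the singularity at $vx=0$), the kernel $\Pi$ is Feller, so Krylov--Bogolyubov yields at least one invariant probability measure. I would then constrain any invariant $\nu$: testing invariance against $\hat z\mapsto\langle w,\proj_{\hat z}w\rangle$ and using \eqref{eq:stochastic family} shows that the barycenter $\rho_\nu=\int\proj_{\hat x}\,\d\nu(\hat x)$ satisfies $\phi(\rho_\nu)=\rho_\nu$; by \irr\ the $\phi$-invariant state is unique, so every invariant measure has the same barycenter $\rho\inv$ and is supported on $\P(\range\rho\inv)=\P(E)$. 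I may therefore restrict to $E$ and assume $\phi$ irreducible, which supplies the cyclic Perron--Frobenius structure $E=\bigoplus_{r=0}^{m-1}E_r$ with $vE_r\subseteq E_{r+1\bmod m}$ for $\mu$-a.e.\ $v$, and makes the Ces\`aro average $\frac1m\sum_{r=0}^{m-1}\phi^{mn+r}$ converge geometrically to the projection $\rho\mapsto(\tr\rho)\,\rho\inv$. This already identifies $m$ and a candidate rate $\lambda$.

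The heart of the matter is a merging statement for two trajectories driven by a common measurement record. Writing $A_n=V_n\cdots V_1$, the condition \pur\ (via K\"ummerer--Maassen applied to the maximally mixed initial state) forces $A_nA_n^*/\tr(A_nA_n^*)$ to approach a rank-one projector, i.e.\ $A_n$ becomes asymptotically proportional to a rank-one map $u_nw_n^*$; consequently $A_n\cdot\hat x$ and $A_n\cdot\hat y$ are both pushed onto the common direction $\hat u_n$, so $d(\hat x_n,\hat y_n)\to0$. To run this under $\pp_{\hat x}$ I would use that on each $\mathcal F_n$ one has $\pp_{\hat x}\ll\pp_{\id/k}$ with density $k\|A_nx\|^2/\|A_n\|_{\mathrm{HS}}^2\le k$, so the purification of $A_n$ (an $\pp_{\id/k}$-almost sure event) transfers. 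The quantitative rate and the ``lost direction'' case $\langle w_\infty,y\rangle=0$ are exactly where the new technique enters: the likelihood ratio $L_n=\|y_n\|^2/\|x_n\|^2$, the density of $\pp_{\hat y}$ with respect to $\pp_{\hat x}$ on $\mathcal F_n$, is by \eqref{eq:stochastic family} a nonnegative $\pp_{\hat x}$-martingale, and a maximum-likelihood-type analysis of the supermartingale $\log L_n$ (whose increments measure the one-step distinguishability of $\hat x_n$ and $\hat y_n$) is used both to exclude degenerate directions and to upgrade $d(\hat x_n,\hat y_n)\to0$ to a bound $\ee_{\pp_{\hat x}}[d(\hat x_n,\hat y_n)]\le C\lambda^{n}$ valid for all $\hat x,\hat y$.

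Granting exponential merging, I would deduce \eqref{eq:conv with period} by combining it with the spectral gap from the first step. The difficulty is that $\pp_{\hat x}$ depends on the starting point, so the two chains cannot simply be coupled through the same randomness; instead, for $1$-Lipschitz $f$ I would write $\Pi^{mn+r}f(\hat x)-\Pi^{mn+r}f(\hat y)$ through the mixed-record measure $\pp=\tfrac12(\pp_{\hat x}+\pp_{\hat y})$ and its densities $2p_s,2(1-p_s)$ with $p_s=\|x_s\|^2/(\|x_s\|^2+\|y_s\|^2)$, so that merging controls the term carrying $f(\hat x_s)-f(\hat y_s)$, while the residual weight term $\ee_{\pp}[(2p_s-1)\,\cdot\,]$ is averaged out by the relaxation of the merged trajectory toward $\nu\inv$ at rate $\lambda$ along the period-$m$ Ces\`aro average (it is the cyclic decomposition of $\phi$ that forces the average over $r$). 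This gives $\sup_{\hat x,\hat y}W_1\big(\frac1m\sum_{r}\delta_{\hat x}\Pi^{mn+r},\frac1m\sum_{r}\delta_{\hat y}\Pi^{mn+r}\big)\le C\lambda^n$; since any invariant measure satisfies $\frac1m\sum_r\nu\Pi^{mn+r}=\nu$, this forces uniqueness, and integrating the pairwise bound against an arbitrary $\nu$ and against $\nu\inv$ yields \eqref{eq:conv with period} by convexity of $W_1$.

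The step I expect to be the main obstacle is the exponential merging together with its conversion into a Wasserstein bound. Because the matrices need not be invertible, none of the contraction arguments from the theory of products of random matrices apply, and \pur\ gives only qualitative purification; extracting a geometric rate, controlling the likelihood-ratio martingale so that no direction is asymptotically lost, and reconciling the state-dependence of the driving measure $\pp_{\hat x}$ with the need for a genuine coupling are the delicate points, and are precisely what the maximum-likelihood estimation is designed to handle.
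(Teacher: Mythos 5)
Your soft steps (Feller/Krylov--Bogolyubov existence, the barycenter $\rho_\nu=\int\proj_{\hat x}\,\d\nu(\hat x)$ being a fixed point of $\phi$ and hence unique under \irr, and the period $m$ and candidate rate coming from the peripheral spectrum of $\phi$) all match the paper. The genuine gap is in the step you yourself flag as ``the main obstacle'' but never actually carry out: the quantitative merging estimate. Two concrete problems. First, the route you sketch --- coupling the trajectories from $\hat x$ and $\hat y$ through a common record under the mixed measure $\tfrac12(\pp_{\hat x}+\pp_{\hat y})$ --- hits an obstruction as soon as you take expectations: the pointwise bound is $d(\hat x_n,\hat y_n)\le \|\wedge^2 W_n\|\,d(\hat x,\hat y)/(\|W_nx\|\,\|W_ny\|)$, and integrating it against the density $\tfrac12(\|W_nx\|^2+\|W_ny\|^2)\,\d\mu^{\otimes n}$ produces the factor $(\|W_nx\|^2+\|W_ny\|^2)/(2\|W_nx\|\,\|W_ny\|)$, which is $\ge 1$ and unbounded (precisely when one of the two initial directions is nearly lost). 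Your proposed fix via the supermartingale $\log L_n$ is asserted, not performed, and it is not clear it can deliver a rate that is uniform over pairs $(\hat x,\hat y)$. Second, even granting qualitative merging from \pur, you supply no mechanism that upgrades it to a \emph{geometric} rate; ``maximum-likelihood-type analysis'' is named but never instantiated.

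The paper sidesteps both problems with a different device: it compares $\hat x_n$ not with a second trajectory but with the $\outalg_n$-measurable process $\hat y_n=W_n\cdot\hat z_n$, where $\hat z_n=\mathop{\mathrm{argmax}}_{\hat x}\|W_nx\|^2$ is the maximum-likelihood estimator of the \emph{initial} state. Then $\|W_nz_n\|=\|W_n\|\ge\|W_nx_0\|$, so the troublesome ratio is bounded by $1$, and the weight $\|W_nx_0\|^2$ defining $\pp_\nu$ exactly cancels the denominator, giving $\ee_\nu\big(d(\hat x_n,\hat y_n)\big)\le\int\|\wedge^2 W_n\|\,\d\mu^{\otimes n}=:f(n)$. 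The geometric rate then comes essentially for free: $f$ is submultiplicative, and $f(n)\to0$ because the martingale $M_n=W_n^*W_n/\tr(W_n^*W_n)$ converges under \pur\ to a rank-one projector, so Fekete's lemma yields $f(n)\le C\lambda^n$. The final assembly also differs from yours: rather than a two-point Wasserstein contraction integrated against $\nu$ and $\nu\inv$, the paper approximates $\hat x_{m(p+q)+r}$ by the $\outalg$-measurable variable $\hat y_{mp}\circ\theta^{mq+r}$ and uses that the $\outalg$-marginal of $\pp_\nu$ is $\pp^{\rho_\nu}$, whose Ces\`aro averages under the shift converge geometrically to $\ee^{\rho\inv}$ by the Perron--Frobenius theorem for $\phi$. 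As written, your proposal leaves the central quantitative estimate unestablished, and the specific coupling you propose does not obviously close.
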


The Wasserstein metric is constructed with respect to a natural metric on the complex projective space. This metric is defined, for $\hat{x},\,\hat{y}$ in $\sta$, by
\begin{equation}
\label{eq:metric}
\dist(\hat{x},\hat{y}) = \big(1-|\lps x,y\rps|^2\big)^{\frac12}, 
\end{equation}
where $x,\,y$ are unit length representative vectors and $\langle\,\cdot\,,\,\cdot\,\rangle$ is the canonical hermitian inner product on $\mathbb{C}^k$.

\medskip

Let us now compare our results to those of the article \cite{Guivarch2016} of Guivarc'h and Le Page. They consider a probability distribution $\mu$ with support in $GL_k(\mathbb C)$, without requiring the normalization condition \eqref{eq:stochastic family},  and study the transition kernel on $\sta$ given, for $S\in \mathcal B$, by
\[
\Pi_{s}(\hat x,S) \propto \int_{M_k(\mathbb C)}\mathbf{1}_S(v\cdot \hat x)  \|v x\|^s \mathrm{d} \mu(v).
\]
In the case $s=2$, Theorem A of \cite{Guivarch2016} implies the conclusions of \Cref{thm:uniqueness} under two assumptions:
\begin{itemize}
\item strong irreducibility, in the sense that there is no non-trivial finite union of proper subspaces of $\cc^k$ left invariant by all $v\in \mathrm{supp}\,\mu$,
\item contractivity, in the sense that there exists a sequence $(a_n)$ in $T_\mu$, the smallest closed sub-semigroup of $GL_k(\cc)$ containing $\operatorname{supp}\mu$, such that $\lim_{n\to\infty}a_n/\|a_n\|$ exists and is of rank one.
\end{itemize}
It is, however, a simple exercise to prove that strong irreducibility of $\mu$ implies \irr\ with $E=\cc^k$. In addition, if we assume $\operatorname{supp}\mu\subset GL_k(\mathbb C)$ and $\operatorname{supp}\mu$ is strongly irreducible, the equivalence
$$\mbox{\pur}\iff T_\mu\mbox{ is contracting}$$
holds (see Appendix \ref{app:contractivity_iff_pur}).
Our results therefore offer a strong refinement of \cite{Guivarch2016} in the restricted framework of $s=2$ with $\int v^* v \,\d\mu(v)=\id_{\cc^k}$.

\medskip
The article is structured as follows. \Cref{sec:unique} is devoted to the first part of \Cref{thm:uniqueness}, that is the uniqueness of the invariant measure. 
In \Cref{sec:speed} we show the geometric convergence towards the invariant measure with respect to the $1$-Wasserstein metric. 
In \Cref{sec_Lyapunoventropy} we discuss the Lyapunov exponents of the process and relate them to the convergence between the Markov chain and an estimate of the chain used in our proofs. 

\smallskip
\paragraph{\textbf{Notation}} In all of the following, for $x\in\cc^k\setminus\{0\}$, $\hat x$ is its equivalence class in $\P(\cc^k)$ and, for $\hat x$ in $\P(\cc^k)$, $x$ is an arbitrary norm one vector representative of $\hat x$. If e.g.\ $\pp_\nu$ (resp. $\mathbb P^\rho$) is a probability measure (depending on some a priori object $\nu$ (resp. $\rho$)) then $\ee_\nu$\ (resp. $\ee^\rho$) is the expectation with respect to $\pp_\nu$ (resp. $\pp^\rho$). The set $\nn$ represents the set of positive integers $\{1,2,\ldots\}$.%, and $\nn_0$ the set of nonnegative integers $\{0,1,2,\ldots\}$.

 \section{Uniqueness of the invariant measure}
 \label{sec:unique}
This section concerns essentially the first part of \Cref{thm:uniqueness}. More precisely, under {\irr} and \pur\ we show that the Markov chain has a unique invariant measure. We note that an invariant measure always exists since $\P(\mathbb C^k)$ is compact.
\smallskip

We now proceed to introduce some additional notation. We consider the space of infinite sequences $\out:=M_k(\cc)^{\mathbb N}$, write $\omega = (v_1,v_2, \dots)$ for any such infinite sequence, and denote by $\pi_n$ the canonical projection on the  first $n$ components, $\pi_n(\omega)=(v_1,\ldots,v_n)$. Let $\mathcal M$ be the Borel $\sigma$-algebra on $M_k(\cc)$. For $n\in\nn$, let $\outalg_n$ be the  $\sigma$-algebra on $\Omega$ generated by the $n$-cylinder sets, i.e.\ $\outalg_n = \pi_n^{-1}(\mathcal M^{\otimes n})$. We equip the space $\Omega$ with the smallest $\sigma$-algebra $\outalg$ containing $\outalg_n$ for all $n\in \nn$. We let $\mathcal B$ be the Borel $\sigma$-algebra on $\P(\mathbb C^k)$, and denote
\[\jointalg_n=\mathcal B\otimes \outalg_n,\qquad \jointalg=\mathcal B\otimes \outalg.\]
This makes $\big(\P(\mathbb C^k)\times \Omega,\jointalg\big)$ a measurable space. With a small abuse of notation we denote the sub-$\sigma$-algebra $\{\emptyset,\P(\mathbb C^k)\}\times \outalg$ by $\outalg$ and equivalently identify any $\outalg$-measurable function $f$ with a $\jointalg$-measurable function $f$ satisfying $f(\hat{x},\omega) = f(\omega)$.

For $i\in\mathbb N$, we consider the random variables $V_i : \Omega \mapsto  M_k(\mathbb C)$,
\begin{equation}
	V_i(\omega) = v_i \quad \mbox{for} \quad \omega=(v_1,v_2,\ldots), \label{eq:W}
\end{equation}
and we introduce $\mathcal O_n$-mesurable random variables $(W_n)$ defined for all $n\in\mathbb N$ as $$W_n=V_{n}\ldots V_{1}.$$ 

With a small abuse of notation we identify cylinder sets and their bases, and extend this identification to several associated  objects. In particular we identify $O_n\in \mathcal M^{\otimes n}$ with $\pi_n^{-1}(O_n)$, a function $f$ on $\mathcal M^{\otimes n}$  with $f \circ \pi_n$ and a measure $\mu^{\otimes n}$ with the measure $\mu^{\otimes n} \circ \pi_n$.
Since $\mu$ is not necessarily finite, note that we can not extend $(\mu^{\otimes n})$ into a measure on $\Omega$.

Let $\nu$ be a probability measure over $(\sta,\mathcal B)$. We extend it to a probability measure $\mathbb{P}_\nu$ over $(\sta\times\out,\jointalg)$ by letting, for any $S\in \mathcal B$ and any cylinder set $O_n \in \outalg_n$,
\begin{equation} \label{eq_defPnu}
\mathbb{P}_\nu(S \times O_n):=\int_{S\times O_n}  \|W_n(\omega)x\|^2 \d\nu(\hat x) \d\mu\pt n(\omega).
\end{equation}
From relation \eqref{eq:stochastic family}, it is easy to check that the expression \eqref{eq_defPnu} defines a consistent family of probability measures and, by Kolmogorov's Theorem, this defines a unique probability measure $\pp_\nu$ on $\joint$. In addition, the restriction of $\mathbb{P}_\nu$ to $\mathcal B\otimes \{\emptyset,\Omega\}$ is by construction $\nu$.
\smallskip

We now define the random process $(\hat x_n)$. For $(\hat x, \omega)\in \joint$ we define $\hat x_0(\hat x, \omega)=\hat x$. Note that for any $n$, the definition \eqref{eq_defPnu} of $\pp_\nu$ imposes
\[\pp_\nu(W_n x_0 = 0)=0.\]
This allows us to define a sequence $(\hat x_n)_{n\in\mathbb N}$ of $(\jointalg_n)$-adapted random variables on the probability space $(\joint,\jointalg, \pp_\nu)$ by letting
\begin{equation} \label{eq_defxn}
\hat x_n:= W_n\cdot \hat x 
\end{equation}
whenever the expression makes sense, i.e.\ for any $\omega$ such that $W_n(\omega) x\neq 0$, and extending it arbitrarily to the whole of $\Omega$. The process $(\hat x_n)$ on $(\Omega\times \P(\mathbb C^k),\jointalg, \mathbb{P}_\nu)$ has the same distribution as the Markov chain defined by $\Pi$ and initial probability measure $\nu$.

Let us highlight the relation between $\pp_\nu$ and density matrices. To that end, let
\begin{equation} \label{eq_defrhonu}
\rho_\nu:= \mathbb E_\nu(\proj_{\hat x}).
\end{equation}
By linearity and positivity of the expectation, $\rho_\nu\in\mathcal D_k$. Note that, conversely, for a given $\rho\in\mathcal D_k$ there exists $\nu$ (in general non-unique) such that $\rho_\nu= \rho$. For example, if a spectral decomposition of $\rho$ is $\rho=\sum_j p_j \proj_{x_j}$ then necessarily $\sum_j p_j=1$, so that $\nu = \sum_j p_j \delta_{\hat x_j}$ is a probability measure on $\sta$, and it satisfies the desired relation \eqref{eq_defrhonu}.

This relation motivates the following definition of probability measures over $(\out,\outalg)$. For $\rho\in\mathcal{D}_k$  and any cylinder set $O_n \in \outalg_n$, let
\begin{equation} \label{eq_defPrho}
\pp^{\rho}(O_n):=  \int_{O_n}\tr\big(W_n(\omega) \rho \hspace{0.08em}W_n^*(\omega)\big) \mathrm{d} \mu\pt n(\omega). 
\end{equation}
In particular, for any $S\in\staalg$ and $A\in \outalg$,
\begin{equation}\label{eq:desintegration}
\pp_\nu(S\times A)=\int_{S}\pp^{\pi_{\hat x}}(A)\, \d\nu(\hat x).
\end{equation}

The following proposition elucidates further the connections between $\pp_\nu$ and $\pp^{\rho_\nu}$.
\begin{proposition}\label{prop:marginal}
The marginal of $\mathbb{P}_\nu$ on $\outalg$ is the probability measure $\mathbb P^{\rho_\nu}$. 

Moreover, if {\irr} holds, $\pp^{\rho_{\nu_a}}=\pp^{\rho_{\nu_b}}$ for any two $\Pi$-invariant probability measures $\nu_a$ and $\nu_b$.
\label{prop:2.1}
\end{proposition}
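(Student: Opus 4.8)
The plan is to treat the two assertions separately: the first is a direct computation identifying a marginal, and the second reduces $\Pi$-invariance of a measure on $\sta$ to $\phi$-invariance of its barycenter $\rho_\nu$, after which I invoke the already-recalled uniqueness of the invariant state.

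For the marginal statement, I would start from the disintegration \eqref{eq:desintegration} with $S=\sta$, so that for any cylinder set $O_n\in\outalg_n$,
\[
\pp_\nu(\sta\times O_n)=\int_\sta \pp^{\proj_{\hat x}}(O_n)\,\d\nu(\hat x)=\int_\sta\int_{O_n}\tr\big(W_n\proj_{\hat x}W_n^*\big)\,\d\mu\pt n(\omega)\,\d\nu(\hat x).
\]
Since the integrand is nonnegative, Tonelli's theorem lets me exchange the order of integration; linearity of the trace and of the matrix-valued integral, together with $\int_\sta\proj_{\hat x}\,\d\nu(\hat x)=\rho_\nu$, then yield $\int_{O_n}\tr(W_n\rho_\nu W_n^*)\,\d\mu\pt n(\omega)=\pp^{\rho_\nu}(O_n)$. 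As cylinder sets generate $\outalg$, this identifies the $\outalg$-marginal of $\pp_\nu$ with $\pp^{\rho_\nu}$.

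For the second assertion, the key observation is that the map $\nu\mapsto\rho_\nu$ intertwines $\Pi$ with the channel $\phi$, in the sense that $\rho_{\nu\Pi}=\phi(\rho_\nu)$. To establish this I would compute $\rho_{\nu\Pi}=\int_\sta\proj_{\hat y}\,\d(\nu\Pi)(\hat y)$ by unfolding the action of $\Pi$ and using the identity $\|vx\|^2\,\proj_{v\cdot\hat x}=v\proj_{\hat x}v^*$ from \eqref{eq_xnrhoncorresp}. This turns the weighted pushforward into
\[
\rho_{\nu\Pi}=\int_\sta\int_{M_k(\cc)}v\proj_{\hat x}v^*\,\d\mu(v)\,\d\nu(\hat x)=\int_\sta\phi(\proj_{\hat x})\,\d\nu(\hat x)=\phi(\rho_\nu),
\]
by linearity of $\phi$. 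Consequently, if $\nu$ is $\Pi$-invariant, then $\rho_\nu$ is a fixed point of $\phi$.

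Finally, under \irr\ the channel $\phi$ admits a unique invariant state, which is exactly the algebraic characterization of uniqueness recalled before the statement of \Cref{thm:uniqueness}. Hence $\rho_{\nu_a}$ and $\rho_{\nu_b}$, being two $\phi$-invariant states, must coincide; and since $\pp^\rho$ depends on $\nu$ only through $\rho_\nu$, this forces $\pp^{\rho_{\nu_a}}=\pp^{\rho_{\nu_b}}$. I do not anticipate a serious obstacle here: the only points requiring care are the justification of the interchange of integrals (immediate from nonnegativity) and the passage of integrals through the linear maps $\tr$ and $\phi$, both routine in finite dimension. The conceptual content lies entirely in the intertwining relation $\rho_{\nu\Pi}=\phi(\rho_\nu)$ combined with the cited uniqueness of the invariant state.
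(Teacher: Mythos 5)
Your proposal is correct and follows essentially the same route as the paper: the marginal identity is the same Tonelli-plus-linearity computation on cylinder sets, and your intertwining relation $\rho_{\nu\Pi}=\phi(\rho_\nu)$ is exactly the paper's observation that $\ee_\nu(\proj_{\hat x_1})=\phi(\rho_\nu)$, followed by the same appeal to uniqueness of the $\phi$-invariant state under \irr.
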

\begin{proof}
By construction it is sufficient to check the equality of the measures on cylinder sets. Let $O_n \in \outalg_n$; from the definition of $\mathbb{P}_\nu$,  and trace and integral linearity we have 
\begin{align*}
\mathbb{P}_\nu(\sta\times O_n)&=\int_{\sta\times O_n} \tr\big(W^*_n(\omega)W_n(\omega)\proj_{\hat x}\big)\, \d\nu(\hat x) \d \mu\pt n(\omega)\\
&=\int_{O_n} \tr\big(W_n^*(\omega)W_n(\omega)\int_{\sta} \proj_{\hat x} \,\d\nu(\hat x)\big) \,\d \mu\pt n(\omega)\\
&=\int_{O_n} \tr\big(W_n^*(\omega)W_n(\omega)\rho_\nu\big) \,\d \mu\pt n(\omega).
\end{align*}
The equality between the marginal of $\pp_\nu$ on $\outalg$ and $\pp^{\rho_\nu}$ follows.

If $\nu$ is an invariant measure, on the one hand
$$\mathbb E_\nu(\proj_{\hat x_1})=\mathbb E_\nu(\proj_{\hat x_0})=\rho_\nu.$$
On the other hand,
\begin{align*}
\mathbb E_\nu(\proj_{\hat x_1})=&\int_{\sta\times M_k(\cc)} \frac{v\pi_{\hat x_0} v^*}{\|vx_0\|^2}\|vx_0\|^2\d\nu(\hat x_0)\d\mu(v)\\
=&\int_{M_k(\cc)} v\,\ee_\nu(\proj_{\hat x_0})\,v^*\d\mu(v)\\
=&\phi(\rho_\nu),
\end{align*}
so that  $\rho_\nu$ is a fixed point of $\phi$. 
 
Hence if {\irr} holds, $\rho_\nu$ is the unique fixed point of $\phi$ in $\mathcal D_k$. Hence $\rho_{\nu_a}=\rho_{\nu_b}$ and $\pp^{\rho_{\nu_a}}=\pp^{\rho_{\nu_b}}$ holds.
\end{proof}

In the following we use the measure $\ppch=\pp^{\frac1k\id_{\cc^k}}$ associated to the operator $\id_{\mathbb C^k}/k\in\mathcal D_k$ as a reference measure.
Since for any $\rho\in\mathcal D_k$ there exists a constant $c$ such that $\rho\leq c \frac{\id_{\mathbb C^k}}{k}$ we have
$$\pp^\rho\ll \ppch.$$
The Radon--Nykodim derivative will be made explicit in Proposition \ref{lem:conv proj}.
 To that end, we use a particular $(\outalg_n)$-adapted process. We define a sequence of matrix-valued random variables:
$$M_n:=\frac{W_n^*W_n}{\tr(W_n^*W_n)} \quad \mbox{if} \quad \tr(W_n^*W_n)\neq0$$
and extend the definition arbitrarily whenever $\tr(W_n^*W_n)=0$. The latter alternative appears with probability 0. Indeed $\ppch\big(\tr(W_n^*W_n)=0\big)=0$ and then by the absolute continuity of $\pp^\rho$ with respect to $\ppch$ we have $\pp_\nu\big(\tr(W_n^*W_n)=0\big)=\pp^{\rho_\nu}\big(\tr(W_n^*W_n)=0\big)=0$ for any measure $\nu$. The key property of $M_n$, that we establish in the proof of Proposition \ref{lem:conv proj}, is that it is a $(\outalg_n)$-martingale w.r.t.\ $\ppch$.

%Random variables $M_n$ and $W_n$ are connected by the polar decomposition. As this will be useful in the following, we introduce the decomposition explicitly by
From the existence of a polar decomposition for $W_n$, for each $n$, there exists a unitary matrix-valued random variable $U_n$ such that
\begin{equation}\label{eq:polar}
W_n=U_n\sqrt{\tr(W_n^*W_n)}M_n^{\frac12}.
\end{equation}
This process $(U_n)$ can be chosen to be $(\outalg_n)$-adapted.
%where $(U_n)$ is a sequence of unitary matrix-valued random variables. In fact \eqref{eq:polar} should be understood as a definition of $U_n$ and we note that the process $U_n$ can be chosen in an $\mathcal O_n$-measurable way.
\smallskip

The key technical results about $M_n$ needed for our proofs are summarized in the following proposition.

\begin{proposition}\label{lem:conv proj}\, \hfill
For any probability measure $\nu$ over $(\sta,\mathcal B)$, $(M_n)$
converges $\mathbb{P}_\nu\as$ and in $L^1$-norm to an $\outalg$-measurable random variable $M_\infty$. The  change of measure formula
\begin{eqnarray}
\frac{\d\mathbb P^\rho}{\d \ppch}=k\,\tr(\rho M_\infty)
\end{eqnarray} 
holds true for all $\rho\in\mathcal D_k$.

 Moreover, the measure $\mu$ verifies {\pur} if and only if $M_\infty$ is $\pp_\nu\as$ a rank one projection for any probability measure $\nu$ over $(\sta,\mathcal B)$.
\end{proposition}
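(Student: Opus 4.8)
The plan is to establish the three assertions in order: martingale convergence of $(M_n)$, the Radon--Nikodym formula, and the equivalence with \pur.

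First I would prove that $(M_n)$ is a bounded $(\outalg_n)$-martingale with respect to $\ppch$. The natural computation is to evaluate $\eech\big(\tr(\rho M_{n+1}) \mid \outalg_n\big)$ for arbitrary $\rho \in \mathcal D_k$. Writing $W_{n+1} = V_{n+1} W_n$ and using the definition \eqref{eq_defPrho} of $\pp^\rho$, the conditional expectation should unfold as an integral over $v$ against $\d\mu(v)$ of $\tr\big(W_n^* v^* v W_n \rho\big)$ divided by the appropriate normalization; the stochasticity condition \eqref{eq:stochastic family}, $\int v^* v\,\d\mu(v) = \id$, is exactly what collapses this integral back to $\tr(W_n^* W_n \rho) = \tr(W_n^*W_n)\,\tr(M_n\rho)$ up to the trace normalization. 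More precisely, since $\tr(W_{n+1}^* W_{n+1}) = \tr(W_n^* V_{n+1}^* V_{n+1} W_n)$ is the density appearing in $\ppch$, the change of measure cancels the normalizing denominator and leaves $\eech(M_{n+1}\mid\outalg_n) = M_n$. Because each $M_n \geq 0$ with $\tr M_n = 1$, the martingale is bounded in every entry, so by the martingale convergence theorem it converges $\ppch\as$ and in $L^1(\ppch)$ to an $\outalg$-measurable limit $M_\infty$, which is again a density matrix. Since $\pp_\nu \ll \ppch$ on $\outalg$ (via Proposition \ref{prop:marginal} and $\pp^\rho \ll \ppch$), the $\pp_\nu\as$ convergence follows.

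Next, for the change of measure formula, the idea is that $M_n$ already encodes the Radon--Nikodym derivative at finite $n$: from \eqref{eq_defPrho} and the definition of $\ppch = \pp^{\id/k}$, one has on $\outalg_n$ that $\d\pp^\rho/\d\ppch = \tr(W_n^*W_n\rho)/\tr(W_n^*W_n \cdot \tfrac1k\id) = k\,\tr(\rho M_n)$. Thus $k\,\tr(\rho M_n)$ is precisely the density of $\pp^\rho$ restricted to $\outalg_n$; it is itself a uniformly integrable $\ppch$-martingale converging in $L^1(\ppch)$ to $k\,\tr(\rho M_\infty)$, and the $L^1$ convergence identifies this limit as the Radon--Nikodym derivative of $\pp^\rho$ with respect to $\ppch$ on the full $\sigma$-algebra $\outalg$.

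Finally, for the equivalence with \pur, I would connect the rank of the limiting projection to the defining condition. The first direction, that $M_\infty$ being rank one forces purification, should follow because $\tr(\rho M_\infty)$ distinguishes pure states in the limit. For the converse, the plan is to show that $M_\infty$ is $\pp_\nu\as$ an orthogonal projection (not merely a density matrix): one expects the martingale to converge to a limit whose eigenvalue structure is rigid, and the $L^1$-martingale relation combined with strict convexity should force the nonzero spectrum of $M_\infty$ to be flat, i.e.\ $M_\infty$ is $\tfrac1r \proj$ for a projector $\proj$ of some rank $r$. Then I would argue that the event $\{\proj \text{ has rank } > 1\}$ together with the martingale identity produces, upon conditioning, an orthogonal projector $\proj$ satisfying $\proj v_1^*\cdots v_n^* v_n \cdots v_1 \proj \propto \proj$ for $\mu\pt n$-almost all tuples and all $n$, contradicting \pur\ unless $r=1$. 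The main obstacle I anticipate is this last step: showing that the almost-sure limit $M_\infty$ is genuinely a (normalized) projection and extracting from a rank-$>1$ limit a deterministic invariant projector verifying the algebraic identity in \pur. This requires carefully transferring the asymptotic flatness of the spectrum of $M_n$ back to the finite-$n$ algebraic condition on the $v_i$, presumably by exploiting the polar decomposition \eqref{eq:polar} and the fact that $M_n^{1/2}$ acts isometrically in the limit on the range of the limiting projection.
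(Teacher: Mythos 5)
Your first two steps are correct and essentially identical to the paper's: the stochasticity condition \eqref{eq:stochastic family} makes $(M_n)$ a bounded $(\outalg_n)$-martingale under $\ppch$, giving a.s.\ and $L^1$ convergence, and the identity $\tr(W_n^*W_n\rho)=k\,\tr(\rho M_n)\,\tr\big(W_n^*W_n\tfrac1k\id\big)$ identifies $k\,\tr(\rho M_n)$ as the density of $\pp^\rho$ on $\outalg_n$, whence the change of measure formula in the limit. No objection there.

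The equivalence with \pur\ is where your proposal has genuine gaps, in both directions. For the direction ``$M_\infty$ rank one $\Rightarrow$ \pur'' you offer only the phrase that $\tr(\rho M_\infty)$ ``distinguishes pure states,'' which is not an argument; the actual proof is a short contradiction: if \pur\ fails there is a rank-two projector $\proj$ with $\proj M_n\proj\propto\proj$ for $\mu\pt n$-a.a.\ tuples, hence $\proj M_\infty\proj\propto\proj$ in the limit, and rank one of $M_\infty$ forces $\proj M_\infty\proj=0$ a.s., contradicting $\eech(M_\infty)=\tfrac1k\id$ (which follows from $L^1$ convergence and $\eech(M_n)=\tfrac1k\id$). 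For the converse, your plan hinges on first showing $M_\infty=\tfrac1r\proj$ via ``strict convexity,'' a claim you neither prove nor need: the argument only requires the orthogonal projector $\pi_\infty$ onto the range of $M_\infty$, not flatness of the spectrum. The genuinely missing idea is how to transfer the limit back to the finite-$n$ algebraic condition. The paper does this by a conditional-variance (quadratic variation) computation: the martingale property gives $\sum_n\eech\big((M_{n+p}-M_n)^2\mid\outalg_n\big)<\infty$, hence $\eech\big((M_{n+p}-M_n)^2\mid\outalg_n\big)\to0$; writing this conditional expectation explicitly via the polar decomposition \eqref{eq:polar} and passing to a subsequential limit $U_\infty$ of the unitaries yields, $\ppch\as$ and for $\mu\pt p$-a.a.\ $(v_1,\ldots,v_p)$,
\begin{equation*}
U_\infty\pi_\infty U_\infty^*\,v_1^*\cdots v_p^*v_p\cdots v_1\,U_\infty\pi_\infty U_\infty^*\propto U_\infty\pi_\infty U_\infty^*,
\end{equation*}
to which \pur\ applies directly to give $\rank M_\infty=1$. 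Note also that the projector obtained here is \emph{random} and \pur\ is invoked $\omega$-by-$\omega$; your stated worry about extracting a \emph{deterministic} invariant projector points you toward a harder route than is needed. Without the conditional-variance step your argument does not close.
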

\begin{proof}
 We start the proof by showing that $M_n$ is a $\ppch$-martingale. Recall that for all $n\in \mathbb N$ and all $O_n\in \outalg_n$,
$$\ppch(O_n)=\frac1k \int_{O_n} \tr\big(W_{n}^*(\omega)W_n(\omega)\big) \, \d \mu\pt n(\omega).$$ 
From the definition of $W_n$, \Cref{eq:W},
\begin{equation}
\label{eq:MR}
M_{n+1} = \frac{W^*_{n}V^*_{{n+1}}V_{{n+1}} W_n}{\tr\big(W^*_n W_n\big)}\, \frac{\tr\big(W^*_n W_n\big)}{\tr\big(W^*_{n+1} W_{n+1}\big)}.
\end{equation}
This implies that for an arbitrary $\outalg_n$-measurable random variable $Y$
\begin{align*}
\eech(Y M_{n+1}) &= \frac{1}{k} \int_{M_k(\cc)^{n+1}} \frac{W^*_{n}V_{{n+1}}^*V_{{n+1}} W_n}{\tr(W^*_n W_n)} \, Y \, \tr\big(W^*_n W_n\big) \, \d \mu\pt {n+1} \\
					   &=\frac{1}{k} \int_{M_k(\cc)^n} \frac{W^*_{n} W_n}{\tr\big(W^*_n W_n\big)}\,  Y \, \tr\big(W^*_n W_n\big)\, \d \mu\pt n\\
					   &= \eech(Y M_n),
\end{align*}
where the second equality follows from the stochasticity condition \eqref{eq:stochastic family}, $\int v^* v d \mu(v) = \id_{\cc_k}$. This shows that $(M_n)$ is a $(\outalg_n)$-martingale w.r.t.\ $\ppch$. Since the sequence $(M_n)$ is composed of positive semidefinite matrices of trace one, its coordinates are a.s.\ uniformly bounded by $1$. Therefore, the martingale property implies the $L^1$ and a.s. convergence of $(M_n)$ to a $\outalg$-measurable random variable $M_\infty$. Now note that for any $\rho\in\mathcal D_k$,
$$\tr(W_n^* W_n\rho)=\tr(M_n\rho)\,k\,\tr\left(W_n^* W_n \,\frac{\id_{\mathbb C_k}}{k}\right).$$
This way, the convergence of $(M_n)$ implies the change of measure formula.

We now prove the last part of the Proposition. Using the martingale property one can see that for all $n\in\mathbb N$, and any fixed $p \in \mathbb{N}$,
\begin{eqnarray}
V_n^p\ =\ \sum_{k=0}^{p-1}\eech(M_{k+n+1}^2 -  M_k^2)
&=&\sum_{k=0}^n\eech(M_{k+p}^2)-\sum_{k=0}^n\eech(M_k^2)\nonumber\\
&=&\sum_{k=0}^n\eech\big((M_{k+p}-M_k)^2\big)\nonumber\\
&=&\eech\Big(\sum_{k=0}^n\eech\big((M_{k+p}-M_k)^2\vert\mathcal O_k\big)\Big).
\end{eqnarray}
Since $(M_n)$ is bounded and almost surely convergent, the Lebesgue dominated convergence Theorem implies that the term $V_n^p$ is convergent when $n$ goes to infinity. Hence we get that 
$$\eech\Big(\sum_{k=0}^\infty\eech\big((M_{k+p}-M_k)^2\vert\mathcal O_k)\big)\Big)<+\infty
$$
which yields that
$$\lim_{n\rightarrow\infty}\eech\big((M_{n+p}-M_n)^2\vert\mathcal O_n\big)=0.$$
At this stage we use the polar decomposition of $(W_n)$, Eq.~\eqref{eq:polar}, to write
$$
M_{n+p} = \frac{M_n^{\frac12}U_n^*V_{n+1}^*\ldots V_{n+p}^*V_{n+p}\ldots V_{n+1}U_nM_n^{\frac12}}{\tr(M_n^{\frac12}U_n^*V_{n+1}^*\ldots V_{n+p}^*V_{n+p}\ldots V_{n+1}U_nM_n^{\frac12})}.
$$
Then we get an expression for the conditional expectation, c.f. the first part of the proof,
\begin{eqnarray*}
\eech\big((M_{n+p}-M_n)^2\vert\mathcal O_n\big)&=&\int_{M_k(\mathbb C)^p}\left(\frac{M_n^{\frac12}U_n^*v_{1}^*\ldots v_{p}^*v_{p}\ldots v_{1}U_nM_n^{\frac12}}{\tr(M_n^{\frac12}U_n^*v_{1}^*\ldots v_{p}^*v_{p}\ldots v_{1}U_nM_n^{\frac12})}-M_n\right)^2\\&&\qquad\qquad\qquad\qquad\tr(v_{1}^*\ldots v_{p}^*v_{p}\ldots v_{1}U_nM_nU_n^*)\,\d\mu^{\otimes p}(v_1,\ldots,v_p).
\end{eqnarray*}
Since $(U_n)$ are unitary matrices, $\ppch\as$, there exists a subsequence  along which $(U_n)$ converges to some random unitary operator $U_\infty$. Taking the limit as $n$ goes to infinity along this subsequence in the above expression, yields
\begin{eqnarray*}\lefteqn{\int_{M_k(\mathbb C)^p}\left(\frac{M_\infty^{\frac12}U_\infty^*v_{1}^*\ldots v_{p}^*v_{p}\ldots v_{1}U_\infty M_\infty^{\frac12}}{\tr(M_\infty ^{\frac12}U_\infty^*v_{1}^*\ldots v_{p}^*v_{p}\ldots v_{1}U_\infty M_\infty^{\frac12})}-M_\infty\right)^2}\\&\qquad\qquad\qquad\tr(v_{1}^*\ldots v_{p}^*v_{p}\ldots v_{1}U_\infty M_\infty U_\infty^*)\,\d\mu^{\otimes p}(v_1,\ldots,v_p)=0,\quad \ppch\as
\end{eqnarray*}
This implies that 
$$\left(\frac{M_\infty^{\frac12}U_\infty^*v_{1}^*\ldots v_{p}^*v_{p}\ldots v_{1}U_\infty M_\infty^{\frac12}}{\tr(M_\infty ^{\frac12}U_\infty^*v_{1}^*\ldots v_{p}^*v_{p}\ldots v_{1}U_\infty M_\infty^{\frac12})}-M_\infty\right)^2\tr(v_{1}^*\ldots v_{p}^*v_{p}\ldots v_{1}U_\infty M_\infty U_\infty^*)=0,\,\,$$
$\ppch$-almost surely and for $\mu^{\otimes p}$-almost all $(v_1,\ldots,v_p)$. Note that the notion ``$\ppch$-almost surely'' stands for $M_\infty$ and $U_\infty$ whereas ``$\mu^{\otimes p}$-almost all'' stands for the $v_i's$. The product is zero, only if at least one of the terms in the product is zero. It turns out, however, that both cases can be described by a single condition: the product vanishes only if there exists $\lambda$ such that
$$M_\infty^{\frac12}U_\infty^*v_{1}^*\ldots v_{p}^*v_{p}\ldots v_{1}U_\infty M_\infty^{\frac12}=\lambda M_\infty.$$
Denoting by $\pi_\infty$ the orthogonal projector onto the range of $M_\infty$, the condition is equivalent to
$\pi_\infty U_\infty^*v_{1}^*\ldots v_{p}^*v_{p}\ldots v_{1}U_\infty \pi_\infty=\lambda \pi_\infty$.
Finally, it follows that
$$U_\infty \pi_\infty U_\infty^*v_{1}^*\ldots v_{p}^*v_{p}\ldots v_{1}U_\infty \pi_\infty U_\infty^*\propto U_\infty \pi_\infty U_\infty^*,$$
for  $\mu^{\otimes p}$-almost all $(v_1,\ldots,v_p)$. Since $U_\infty \pi_\infty U_\infty^*$ is an orthogonal projector, the condition \pur\ implies that $\rank(M_\infty)=\rank(U_\infty \pi_\infty U_\infty^*)=1$. Since $M_\infty$ is a trace one, positive semidefinite matrix this means that $M_\infty$ is a rank one projector.
\medskip

For the converse implication, assume that $M_\infty$ is $\ppch$-almost surely a rank one projection but that {\pur} does not hold. Then there exists $\proj$, a rank two orthogonal projector, s.t. for all $n\in\nn$, $$\proj W_n^*W_n\proj\propto\proj,$$
$\mu\pt n$-almost everywhere. Since $\mu^{\otimes n}$-almost everywhere $M_n\propto W_n^*W_n$, we get that
$$\proj M_n \proj\propto \proj,$$
$\mu^{\otimes n}$-almost everywhere. Thus,
$\proj M_\infty \proj\propto \proj,$ and under our assumption that $\rank M_\infty=1$ $\ppch\as$ and $\rank \pi=2$ this implies that $\proj M_\infty \proj=0$, $\ppch$-almost surely. On the other hand for all $n\in\mathbb N$ we have $\eech(M_n)=\id_{\cc^k}$, and the $L^1$ convergence implies that $\eech(M_\infty)=\id_{\cc^k}$. Then, $\eech(\proj M_\infty \proj)=\proj$ which contradicts $\proj M_\infty \proj=0$ $\ppch\as$
\end{proof}

By the polar decomposition, the rank of $W_n$ is equal to the rank of $M_n$ and the proposition thus implies that $W_n \rho_0 W_n^*/\tr(W_n \rho_0 W_n^*)$ approaches the set of pure states for any $\rho_0\in\mathcal D_k$ if and only if \pur\ holds. This is the result of Maassen and K\"ummerer \cite{Maassen} mentioned in the introduction. Though $M_n$ is not used in \cite{Maassen}, the proof relies on similar ideas.
\medskip

We are now in the position to show that the Markov chain $(\hat{x}_n)$ is asymptotically an $\outalg$-measurable process. This is expressed in the following lemma. 
Whenever {\pur} holds, we denote by $\hat{z} \in \sta$ the $\outalg$-measurable random variable defined by
$$
M_\infty = \pi_{\hat z}.
$$
Recall that $\dist(\cdot,\cdot)$, defined by Eq.~\eqref{eq:metric}, is our metric on $\sta$.

\begin{lemma}\label{lemma:polar}
Assume {\pur} holds. Then for any probability measure $\nu$ on $(\sta,\mathcal B)$,
\[\lim_{n\to \infty} d(\hat x_n,U_n \cdot \hat z)=0\quad \mathbb{P}_\nu\as\]
\end{lemma}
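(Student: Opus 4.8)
The plan is to factor out the unitary part of $W_n$ using the polar decomposition \eqref{eq:polar}, which reduces the statement to a convergence property of the positive part $M_n$ supplied by Proposition \ref{lem:conv proj}. By the disintegration \eqref{eq:desintegration} and the fact that the marginal of $\pp_{\delta_{\hat x}}$ on $\outalg$ is $\pp^{\proj_{\hat x}}$ (Proposition \ref{prop:marginal}, since $\rho_{\delta_{\hat x}}=\proj_{\hat x}$), it suffices to fix $\hat x\in\sta$, with a unit representative $x$, and prove the convergence $\pp^{\proj_{\hat x}}\as$; the case of general $\nu$ then follows by integrating the resulting null event against $\d\nu(\hat x)$. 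All random variables $M_n$, $U_n$, $\hat z$ are $\outalg$-measurable, so from now on I would work under $\pp^{\proj_{\hat x}}$.

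The first step is to rewrite $\hat x_n$. From \eqref{eq:polar} we have $W_n x=\sqrt{\tr(W_n^*W_n)}\,U_nM_n^{1/2}x$, and since the scalar factor is positive, $\hat x_n=\widehat{W_nx}=U_n\cdot\widehat{M_n^{1/2}x}$ whenever $M_n^{1/2}x\neq0$, which holds for all $n$ $\pp^{\proj_{\hat x}}\as$ because $\pp^{\proj_{\hat x}}(W_nx=0)=0$. As the metric \eqref{eq:metric} is invariant under the unitary $U_n$, which preserves the inner product, this yields
\[
\dist(\hat x_n,\,U_n\cdot\hat z)=\dist\big(U_n\cdot\widehat{M_n^{1/2}x},\,U_n\cdot\hat z\big)=\dist\big(\widehat{M_n^{1/2}x},\,\hat z\big).
\]
The key point is that the \emph{same} $U_n$ appears on both sides and cancels, so I will not need any convergence of the (generally divergent) sequence $(U_n)$. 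It remains to show $\dist(\widehat{M_n^{1/2}x},\hat z)\to0$. Since $M_n\to M_\infty=\pi_{\hat z}$ $\pp^{\proj_{\hat x}}\as$ by Proposition \ref{lem:conv proj} and the matrix square root is continuous on positive semidefinite matrices, $M_n^{1/2}\to M_\infty^{1/2}=\pi_{\hat z}$, so $M_n^{1/2}x\to\pi_{\hat z}x=\langle z,x\rangle\,z$ and $\|M_n^{1/2}x\|^2=\langle x,M_nx\rangle\to|\langle z,x\rangle|^2$.

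If $\langle z,x\rangle\neq0$ the limit is a nonzero multiple of $z$, and plugging into \eqref{eq:metric},
\[
\dist\big(\widehat{M_n^{1/2}x},\hat z\big)^2=1-\frac{|\langle M_n^{1/2}x,z\rangle|^2}{\|M_n^{1/2}x\|^2}\xrightarrow[n\to\infty]{}1-\frac{|\langle z,x\rangle|^2}{|\langle z,x\rangle|^2}=0,
\]
which gives the claim. The main obstacle is therefore the degenerate event $\langle z,x\rangle=0$, on which $\pi_{\hat z}x$ vanishes and this argument breaks down. I would rule it out almost surely using the change of measure formula of Proposition \ref{lem:conv proj}: taking $\rho=\proj_{\hat x}$,
\[
\frac{\d\pp^{\proj_{\hat x}}}{\d\ppch}=k\,\tr(\proj_{\hat x}M_\infty)=k\,\langle x,M_\infty x\rangle=k\,|\langle z,x\rangle|^2.
\]
Hence the $\outalg$-measurable event $E=\{\langle z,x\rangle=0\}=\{\langle x,M_\infty x\rangle=0\}$ satisfies
\[
\pp^{\proj_{\hat x}}(E)=\int_E k\,\langle x,M_\infty x\rangle\,\d\ppch=0,
\]
because the integrand vanishes identically on $E$. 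Thus $\langle z,x\rangle\neq0$ holds $\pp^{\proj_{\hat x}}\as$, so the non-degenerate case always occurs and $\dist(\widehat{M_n^{1/2}x},\hat z)\to0$ $\pp^{\proj_{\hat x}}\as$; disintegrating over $\hat x$ via \eqref{eq:desintegration} completes the proof for arbitrary $\nu$.
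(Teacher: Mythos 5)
Your proposal is correct and follows essentially the same route as the paper: polar decomposition to write $\hat x_n = U_n M_n^{1/2}\cdot\hat x_0$, unitary invariance of the metric to cancel $U_n$, convergence $M_n\to\pi_{\hat z}$ from Proposition \ref{lem:conv proj}, and the change-of-measure formula to rule out the degenerate event $\langle z,x\rangle=0$. The only difference is cosmetic — you fix $\hat x$ and work under $\pp^{\proj_{\hat x}}$ before disintegrating, while the paper argues directly under $\pp_\nu$ — and your treatment of the non-orthogonality step is, if anything, slightly more explicit than the paper's.
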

\begin{proof}
We start the proof by showing that for any $\nu$
\begin{equation}\label{eq:racinem}
\lim_{n\to\infty} M_n^{\frac12} \cdot \hat x=\hat z\quad \mathbb P_\nu\as
\end{equation}
Let $\hat x$ be fixed and recall from \Cref{lem:conv proj} that \pur\ implies $M_\infty=\pi_{\hat z}$. In order to show \eqref{eq:racinem}, it is enough to show that $\hat x$ is almost surely not orthogonal to $\hat z$. From \Cref{eq:desintegration} and the change of measure formula in \Cref{lem:conv proj},
$$\d\pp_{\nu}(\hat{x}, \omega) = k\,\tr(\proj_{\hat x} \proj_{\hat z(\omega)})\, \d\big(\nu(\hat{x})\otimes \ppch(\omega)\big).$$
Hence the event $\{\dist(\hat x,\hat z)=1\}$ has $\mathbb P_{\nu}$-measure $0$, and \eqref{eq:racinem} follows from the almost sure convergence of $M_n$ to $\proj_{\hat{z}}$.

Now using the polar decomposition, Eq.~\eqref{eq:polar}, and the fact that proportionality of vectors  means equality of their equivalence classes in $\P(\mathbb C^k)$, we have
$$
\hat{x}_n  = U_n M_n^{\frac12}\cdot \hat x_0.
$$
The first part of the proof then yields,
\[\lim_{n\to \infty} d(\hat x_n,U_n \cdot \hat z)=0,\quad \mathbb{P}_\nu \as\]
\end{proof}

The uniqueness of the invariant measure which is the first part of \Cref{thm:uniqueness} follows as a corollary.

\begin{corollary} \label{coro_uniqueness}
Assume {\pur} and {\irr}. Then the Markov kernel $\Pi$ admits a unique invariant probability measure.
\end{corollary}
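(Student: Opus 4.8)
The plan is to deduce uniqueness from the two facts already established: that under \irr\ every invariant measure $\nu$ induces the \emph{same} $\outalg$-marginal $\pp^{\rho_\nu}$ (Proposition~\ref{prop:marginal}), and that under \pur\ the chain is asymptotically slaved to an $\outalg$-measurable variable (Lemma~\ref{lemma:polar}). Existence was already observed from compactness of $\sta$, so I would only address uniqueness. Let $\nu_a$ and $\nu_b$ be two $\Pi$-invariant probability measures; the goal is to show $\int f\,\d\nu_a=\int f\,\d\nu_b$ for every bounded Lipschitz $f$ on $\sta$, which suffices since such functions are measure-determining on the compact metric space $(\sta,\dist)$.

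First I would record that, by Proposition~\ref{prop:marginal}, $\rho_{\nu_a}=\rho_{\nu_b}$ (both equal the unique $\phi$-fixed point granted by \irr), whence $\pp^{\rho_{\nu_a}}=\pp^{\rho_{\nu_b}}$. Next, note that the approximating variable $U_n\cdot\hat z$ appearing in Lemma~\ref{lemma:polar} is $\outalg$-measurable: $M_\infty=\pi_{\hat z}$ is $\outalg$-measurable and $(U_n)$ is $(\outalg_n)$-adapted. Consequently, for any bounded $\outalg$-measurable $g$, the quantity $\ee_\nu\big(g(U_n\cdot\hat z)\big)=\ee^{\rho_\nu}\big(g(U_n\cdot\hat z)\big)$ depends on $\nu$ only through $\pp^{\rho_\nu}$.

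The core step ties these together through invariance. For an invariant $\nu$ the law of $\hat x_n$ under $\pp_\nu$ is $\nu\Pi^n=\nu$, so $\ee_\nu\big(f(\hat x_n)\big)=\int f\,\d\nu$ for every $n$. Since $f$ is bounded and Lipschitz and $\dist(\hat x_n,U_n\cdot\hat z)\to0$ $\pp_\nu\as$ by Lemma~\ref{lemma:polar}, dominated convergence gives
\[
\int f\,\d\nu=\lim_{n\to\infty}\ee_\nu\big(f(U_n\cdot\hat z)\big)=\lim_{n\to\infty}\ee^{\rho_\nu}\big(f(U_n\cdot\hat z)\big).
\]
Applying this to $\nu_a$ and to $\nu_b$ and using $\pp^{\rho_{\nu_a}}=\pp^{\rho_{\nu_b}}$, the pre-limit terms agree for every $n$, hence so do the limits, yielding $\int f\,\d\nu_a=\int f\,\d\nu_b$. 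As $f$ was an arbitrary bounded Lipschitz function, $\nu_a=\nu_b$.

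I expect the only delicate point to be conceptual rather than computational: recognizing that Lemma~\ref{lemma:polar} lets one express the marginal law of $\hat x_n$ (which equals $\nu$ for an invariant $\nu$) purely in terms of $\outalg$-data, thereby converting the algebraic uniqueness of the $\phi$-fixed point into uniqueness of the invariant measure. The minor technical care is to ensure the limit $\lim_{n\to\infty}\ee^{\rho_\nu}\big(f(U_n\cdot\hat z)\big)$ is legitimately well-defined, which is automatic here since it equals the constant $\int f\,\d\nu$, and to invoke that bounded Lipschitz functions separate probability measures on $\sta$.
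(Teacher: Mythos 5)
Your argument is correct and is essentially the paper's own proof: both use Lemma~\ref{lemma:polar} plus dominated convergence to write $\int f\,\d\nu=\lim_n\ee_\nu\big(f(U_n\cdot\hat z)\big)$ for an invariant $\nu$, and then the $\outalg$-measurability of $U_n\cdot\hat z$ together with Proposition~\ref{prop:marginal} to conclude that this limit is the same for any two invariant measures. The only cosmetic difference is that you test against bounded Lipschitz functions while the paper uses continuous ones; both classes are measure-determining on the compact metric space $\sta$, so nothing changes.
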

\begin{proof}
For an invariant measure $\nu$, the random variable $\hat{x}_n$ is $\nu$-distributed for all $n \in \mathbb{N}$. In particular, $
\mathbb{E}_\nu\big(f(\hat{x}_n)\big)$ is constant for any continuous function $f$. On the other hand \Cref{lemma:polar} and Lebesgue's dominated convergence theorem imply that 
$$\lim_{n\to \infty} \mathbb E_{\nu}\big(f(\hat x_n)-f(U_n\cdot\hat z)\big) =0.$$
Hence 
\begin{equation}
\label{eq:coro1}
\lim_{n\to \infty} \mathbb E_{\nu}\big( f(U_n\cdot\hat z)\big)  = \mathbb{E}_{\nu} \big(f(\hat{x}_0)\big).
\end{equation}

Assume now that there exist two invariant measures $\nu_a$ and~$\nu_b$. Since $U_n\cdot\hat z$ is $\outalg$-measurable, \Cref{prop:marginal} implies
$$\mathbb E_{\nu_a}\big(f(U_n\cdot\hat z)\big)=\mathbb E_{\nu_b}\big(f(U_n\cdot\hat z)\big).$$
Then \Cref{eq:coro1} applied with $\nu=\nu_a$, resp. $\nu = \nu_b$ gives
$$\mathbb E_{\nu_a}\big(f(\hat x_0)\big)=\mathbb E_{\nu_b}\big(f(\hat x_0)\big)$$
which means that $\nu_a=\nu_b$ and the uniqueness is proved.
\end{proof}

Assuming only \pur\ we can actually completely characterize the set of invariant measures.
\begin{proposition}
\label{rem:decomposition}
Assuming \pur\ there exists a set $\{F_j\}_{j=1}^d$ of mutually orthogonal subspaces of $\cc^k$ such that,
for each $j\in\{1,\ldots,d\}$ there exists a unique $\Pi$-invariant probability measure $\nu_j$ supported on $\P(F_j)$, and the set of $\Pi$-invariant probability measures is the convex hull of $\{\nu_j\}_{j=1}^d$.
\end{proposition}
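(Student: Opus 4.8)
The plan is to reduce the classification to the already-established ergodic case of \Cref{coro_uniqueness}, applied on each of a family of orthogonal invariant subspaces whose existence comes from the structure theory of the channel $\phi$ of \eqref{channel}. Writing $P_F$ for the orthogonal projector onto a subspace $F$, recall that a $\supp\mu$-invariant subspace (an \emph{enclosure}) $F$, i.e.\ one with $vF\subseteq F$ for all $v\in\supp\mu$, satisfies $P_F v P_F = v P_F$; together with the stochasticity condition \eqref{eq:stochastic family}, which is exactly $\phi^*(\id)=\id$ for $\phi^*(X)=\int v^*Xv\,\d\mu(v)$, this yields $\phi^*(P_F)\ge P_F$, so that $n\mapsto\tr(P_F\proj_{\hat x_n})$ is a bounded submartingale under every $\pp_\nu$. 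The structure theorem for trace-preserving quantum channels (see e.g.\ \cite{CarbonePautrat,EHK,wolftour}) decomposes the support of the maximal-rank invariant state into minimal enclosures; trace preservation forces those carrying invariant states to be \emph{orthogonal}, while \pur\ rules out nontrivial multiplicity (a multiplicity/decoherence-free space would force, via \Cref{lem:conv proj}, a limit $M_\infty$ of rank larger than one). I therefore expect mutually orthogonal minimal enclosures $F_1,\dots,F_d$ on which $\phi$ restricts to an irreducible channel, together with the fact that the invariant states of $\phi$ form the simplex with vertices $\rho_1,\dots,\rho_d$, where $\rho_j$ is the unique invariant state supported on $F_j$ and $\tr(P_{F_i}\rho_j)=\delta_{ij}$. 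Establishing this orthogonal decomposition is the main obstacle, and it is where both \eqref{eq:stochastic family} and \pur\ are essential: the two-dimensional example of two non-orthogonal common eigenlines shows that dropping either hypothesis permits non-orthogonal invariant subspaces.

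Next I would check that the hypotheses descend to each block. Fixing $F=F_j$, the maps $v|_F$ satisfy $\int (v|_F)^*(v|_F)\,\d\mu(v)=\id_F$, so they define on $\P(F)$ a Markov chain of the same type, and since $\proj\le P_F$ forces $W_n\proj$ to be supported in $F$, the conditions \pur\ and \irr\ (with $E=F$) for this restricted chain coincide with the global \pur\ and the irreducibility of $\phi|_{F}$. Hence \Cref{coro_uniqueness} provides a unique $\Pi$-invariant probability measure $\nu_j$ on $\P(F_j)$, and by \Cref{prop:marginal} its barycentre $\rho_{\nu_j}$ is the unique fixed point $\rho_j$ of $\phi|_{F_j}$. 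As $\P(F_j)$ is $\Pi$-invariant, each $\nu_j$ is $\Pi$-invariant on all of $\P(\cc^k)$, so by linearity of $\nu\mapsto\nu\Pi$ every convex combination $\sum_j p_j\nu_j$ is invariant; this gives the easy inclusion.

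For the converse, let $\nu$ be any $\Pi$-invariant probability measure and set $p_j:=\tr(P_{F_j}\rho_\nu)$. By \Cref{prop:marginal}, $\rho_\nu$ is a fixed point of $\phi$, hence $\rho_\nu=\sum_j p_j\rho_j$ lies in the simplex above, with $p_j\ge 0$ and $\sum_j p_j=1$. The key point is that the identity \eqref{eq:coro1}, valid for every invariant measure, reads
\[
\int f\,\d\nu=\lim_{n\to\infty}\ee_\nu\big(f(U_n\cdot\hat z)\big)
\]
for continuous $f$, and that $U_n\cdot\hat z$ is $\outalg$-measurable, so by \Cref{prop:marginal} the right-hand side depends on $\nu$ only through the $\outalg$-marginal $\pp^{\rho_\nu}$. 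Since $\rho\mapsto\pp^{\rho}$ is affine by \eqref{eq_defPrho}, we have $\pp^{\rho_\nu}=\sum_j p_j\,\pp^{\rho_j}$, whence
\[
\int f\,\d\nu=\sum_j p_j\,\lim_{n\to\infty}\ee^{\rho_j}\big(f(U_n\cdot\hat z)\big)=\sum_j p_j\int f\,\d\nu_j,
\]
the last equality being \eqref{eq:coro1} applied to $\nu_j$, whose $\outalg$-marginal is $\pp^{\rho_j}$. As $f$ is arbitrary, $\nu=\sum_j p_j\nu_j$, so the invariant measures are exactly the convex hull of $\{\nu_j\}_{j=1}^d$. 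The only subtlety here is interchanging the limit with the finite sum, which is immediate; the entire weight of the proof thus rests on the orthogonal decomposition of the first paragraph.
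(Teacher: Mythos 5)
Your proposal follows essentially the same route as the paper's Appendix B: it invokes the decomposition theory of trace-preserving channels, uses \pur\ to eliminate the multiplicity (tensor) factors so that the fixed points of $\phi$ form the simplex generated by extremal invariant states with mutually orthogonal supports $F_j$, obtains each $\nu_j$ by restricting to $\P(F_j)$, and proves the converse inclusion exactly as the paper does, via \eqref{eq:coro1}, \Cref{prop:marginal} and the affinity of $\rho\mapsto\pp^\rho$. The only cosmetic difference is that you rule out nontrivial multiplicity through the rank of $M_\infty$ from \Cref{lem:conv proj}, whereas the paper directly exhibits a rank-$\geq 2$ projector contradicting \pur; these are equivalent, so the argument is correct and matches the paper's.
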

The subspaces $F_j$ are the ranges of the extremal fixed points of $\phi$ in $\mathcal D_k$. This is shown in the proof of this Proposition, that is provided in \Cref{app:pur_decompo_unique}.

\begin{remark}
Assuming {\irr}\ only, the chain might or might not have a unique invariant probability measure. Indeed if $\supp\mu\subset SU(k)$ Assumption \pur\ is trivially not verified and, as proved in \Cref{app:SU(k)}, the uniqueness of the invariant measure depends on the smallest closed subgroup of $SU(k)$ containing $\supp\mu$. To illustrate this point, in the same appendix, we study two examples with $\mu$ supported on and giving equiprobability to two elements of $SU(2)$ such that \irr\ holds. In the first example $\Pi$ has a unique invariant probability measure whereas in the second example $\Pi$ has uncountably many mutually singular invariant probability measures.
\end{remark}

\section{Convergence}
\label{sec:speed}
We now turn to the proof of the second part of \Cref{thm:uniqueness}, namely the geometric convergence in Wasserstein distance of the process $(\hat x_n)$ towards the invariant measure. We first recall a definition of this distance for compact metric spaces: for $X$ a compact metric space equipped with its Borel $\sigma$-algebra, the Wasserstein distance of order $1$ between two probability measures $\sigma$ and $\tau$ on $X$ can be defined using Kantorovich--Rubinstein duality Theorem as
$$W_1(\sigma,\tau)=\sup_{f\in \Lip_1(X)}\left| \int_{X} f\,\d\sigma- \int _X f\, \d\tau\right|,$$
where $\Lip_1(X)=\{f:X\rightarrow\mathbb R \ \mathrm{s.t.}\ \vert f(x)-f(y)\vert\leq \dist(x,y)\}$ is the set of Lipschitz continuous functions with constant one, and $\dist$ is the metric on $X$.

The proof of \Cref{eq:conv with period} consists of three parts. In the first part we show a geometric convergence in total variation of $\pp^\rho$ to $\pp^{\rho\inv}$ under the shift $\theta(v_1,v_2,\ldots)=(v_2,v_3,\ldots)$. In the second one we show a geometric convergence of the chain $(\hat x_n)$ towards an $\outalg$-measurable process $(\hat y_n)$. Finally, we combine these results to prove \Cref{eq:conv with period}. 

\subsection{Convergence for $\outalg$-measurable random variables}
Let us first discuss the origin of the integer $m$ in \Cref{eq:conv with period}. Let $(E_1,\ldots,E_\ell)$ be an orthogonal partition of a $\supp \mu$-invariant subspace, i.e.\  a family of mutually orthogonal subspaces such that $E_1\oplus\ldots\oplus E_\ell$ is a $\supp \mu$-invariant subspace. We say that $(E_1,\ldots,E_\ell)$ is a $\ell$-cycle of $\phi$ if $vE_j\subset E_{j+1}$ for $\mu$-a.a. $v$ (with the convention $E_{\ell+1}=E_1$).\footnote{As suggested by its name, the notion of cycle for $\phi$  depends only on $\phi$ and not on the specific measure $\mu$ leading to $\phi$ \cite{wolftour,schrader,CarbonePautrat}.} The set of $\ell\in\nn$ for which there exists an orthogonal partition $(E_1,\ldots,E_\ell)$ is non-empty (as it contains $1$) and bounded (as necessarily $\ell\leq k$). 

\begin{definition} \label{def_period}
	The largest $\ell\in\nn$ such that there exists a $\ell$-cycle of $\phi$ is called the period of $\phi$. We denote this period by $m$.
\end{definition}
\begin{remark} \label{remark_period} \,\hfill
\begin{itemize}
	\item The above definition of period for $\phi$ is similar to that of the period of a $\varphi$-irreducible Markov chain. It is obvious that if $(E_1,\ldots,E_\ell)$ is an $\ell$-cycle of $\phi$ then it is also an $\ell$-cycle of $\Pi$. However, the Markov chain defined by $\Pi$ is not $\varphi$-irreducible in general. Hence the results of \cite{MT} on the period of $\varphi$-irreducible Markov chains do not apply and the characterization of the period of $\Pi$ remains an open problem.
	\item The above definition shows that the union $\bigcup_{j=1}^m E_j$ is invariant by $\mu$-a.a. $v$. Hence, the strong irreducibility assumption discussed at the end of the introduction implies that $m=1$.
\end{itemize}	
\end{remark}
The following result is a reformulation of the Perron--Frobenius theorem of Evans and H\o egh-Krohn, \cite{EHK}. The original formulation in \cite{EHK} makes the additional assumption that $E=\cc^k$ in \irr. For the present extension see e.g.\ \cite{wolftour}. In the following statement, and in the rest of the article, for $X$ an operator on $\cc^k$ we denote $\|X\|_1=\tr |X|$ (all statements are identical with a different norm, but this choice will spare us a few irrelevant constants).
\begin{theorem}\label{thm:ergod}
	Assume \irr\ holds. Then there exists a unique $\phi$-invariant element $\rho\inv$ of $\mathcal D_k$ with range equal to the minimal invariant subspace $E$. In addition, there exist two positive constants $c$ and $\lambda<1$ such that, with $m$ defined in \Cref{def_period}, for any $\rho\in\mathcal D_k$ and for any $n\in\nn$,
	\begin{equation}\label{eq:ergod}
	\left\|\frac1m\sum_{r=0}^{m-1} \phi^{mn+r}(\rho)-\rho\inv\right\|_1\leq c\lambda^n.
	\end{equation}
\end{theorem}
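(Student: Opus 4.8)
The plan is to treat $\phi$ as a linear operator on the finite-dimensional space $M_k(\cc)$ and to analyze it through its spectral decomposition, the two structural inputs being the Perron--Frobenius theorem of \cite{EHK} and the fact that $\phi$ is a contraction for the trace norm. First I would record that $\phi$ maps $\mathcal D_k$ into itself, so Brouwer's theorem yields a fixed state, and under \irr\ the algebraic criterion recalled before the statement (see \cite{EHK,wolftour,CarbonePautrat}) makes it unique; call it $\rho\inv$. Since $E$ is $\supp\mu$-invariant one checks that $v\,Q_E=Q_E\,v\,Q_E$ for $\mu$-a.a.\ $v$, where $Q_E$ is the orthogonal projector onto $E$, so $\phi$ restricts to a trace-preserving completely positive map on $\mathcal B(E)$ that is irreducible by minimality of $E$; the theorem of \cite{EHK} then gives that $\rho\inv$ is faithful on $E$, i.e.\ $\range\rho\inv=E$.

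Next I would exploit that every $\phi^n$ is completely positive and trace preserving, hence a trace-norm contraction: for Hermitian $X=X_+-X_-$ one has $\|\phi(X)\|_1\le\tr\phi(X_+)+\tr\phi(X_-)=\|X\|_1$, so $\sup_n\|\phi^n\|_{1\to1}\le1$. Power-boundedness forces the spectral radius of $\phi$ to equal $1$ and, crucially, every unimodular eigenvalue to be semisimple, since a nontrivial Jordan block at a unimodular eigenvalue would make $\|\phi^n\|$ grow. I would therefore write the spectral decomposition $\phi=\sum_{j}\lambda_jP_j+R$, where the $\lambda_j$ are the distinct unimodular eigenvalues, the $P_j$ are the associated complementary spectral projections, and $R$ carries the rest of the spectrum, with $\mathrm{spr}(R)=\beta<1$ and hence $\|R^n\|\le C(\beta')^n$ for any fixed $\beta<\beta'<1$. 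A short Cesàro argument pins down the top eigenspace: for a state $\rho$ the average $\frac1N\sum_{n<N}\phi^n(\rho)$ has only invariant limit points, necessarily equal to $\rho\inv$, while spectrally this limit is $P_1\rho$; hence $P_1(X)=\tr(X)\,\rho\inv$ and eigenvalue $1$ is simple.

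The central step is to identify the remaining unimodular eigenvalues. Using the cyclic Perron--Frobenius structure of \cite{EHK} on $E$ together with \Cref{def_period}, the peripheral spectrum of $\phi|_{\mathcal B(E)}$ is exactly the set of $m$-th roots of unity; and because under \irr\ no nonzero invariant subspace is contained in $E^\perp$ (any such subspace would contain the unique minimal one $E$), the transient block of $\phi$ acting on operators supported on $E^\perp$ carries only spectrum of modulus $<1$, so it contributes to $R$ and not to the peripheral part. Consequently every $\lambda_j$ satisfies $\lambda_j^m=1$. With this in hand the conclusion is immediate: writing $N=mn+r$,
\begin{equation*}
\frac1m\sum_{r=0}^{m-1}\phi^{mn+r}=\sum_{j}\Big(\tfrac1m\sum_{r=0}^{m-1}\lambda_j^{mn+r}\Big)P_j+\tfrac1m\sum_{r=0}^{m-1}R^{mn+r},
\end{equation*}
and for $\lambda_j\ne1$ the coefficient equals $\lambda_j^{mn}\,\tfrac1m\sum_{r=0}^{m-1}\lambda_j^{r}=0$ since $\lambda_j$ is a nontrivial $m$-th root of unity, while the $\lambda_j=1$ term gives $P_1$. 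Applying this to $\rho\in\mathcal D_k$ and using $P_1\rho=\rho\inv$ leaves only the remainder, so $\big\|\frac1m\sum_{r}\phi^{mn+r}(\rho)-\rho\inv\big\|_1\le\frac1m\sum_{r}\|R^{mn+r}\|\,\|\rho\|_1\le C'(\beta')^{mn}$, which is \Cref{eq:ergod} with $\lambda=(\beta')^{m}<1$.

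I expect the main obstacle to be the central step: rigorously matching the combinatorial period $m$ of \Cref{def_period} with the analytic peripheral spectrum on all of $M_k(\cc)$, that is, controlling the transient subspace $E^\perp$ and invoking the precise cyclic structure of \cite{EHK}. The contraction and spectral-gap inputs and the averaging computation are then routine bookkeeping.
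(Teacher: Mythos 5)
Your proposal is correct and follows essentially the same route as the paper: the paper's proof simply invokes Theorem 4.2 of \cite{EHK} to get that the peripheral eigenvalues of $\phi$ are exactly the simple $m$-th roots of unity and then concludes with "$\lambda$ the modulus of the largest non-peripheral eigenvalue", which is precisely the spectral decomposition and Ces\`aro-averaging computation you carry out explicitly. The only difference is that you spell out the routine steps (trace-norm contractivity, semisimplicity of unimodular eigenvalues, control of the transient block on $E^\perp$) that the paper leaves implicit in the citation.
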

\begin{proof}
Theorem 4.2 in \cite{EHK} implies that $\rho\inv$ is the unique $\phi$-invariant element of $\mathcal D_k$, that the eigenvalues of $\phi$ of modulus one are exactly the $m$-th roots of unity, and that they are all simple. The statement follows, with $\lambda$ the modulus of the largest non-peripheral eigenvalue. 
\end{proof}

Recall that $\theta$ is the left shift operator on~ $\Omega$, i.e.\  
$$\theta(v_1,v_2,\ldots)=(v_2,v_3,\ldots).$$
The main result of this section is the following proposition. As announced it concerns the speed of convergence in total variation (expressed in terms of expectation values).
\begin{proposition} \label{prop_cvgFmeasurable}
	Assume \irr\ holds. Then there exist two positive constants $C$ and $\lambda<1$ such that for any $\outalg$-measurable function $f$ with essential bound $\|f\|_\infty$, any $\rho\in\mathcal D_k$ and all $n\in\nn$,
	\begin{equation}
	\left\vert \ee^\rho\left(\frac{1}{m}\sum_{r=0}^{m-1}f\circ\theta^{mn+r}\right)-\ee^{\rho\inv}(f)\right\vert\leq  C\|f\|_\infty\lambda^n.
	\end{equation}
\end{proposition}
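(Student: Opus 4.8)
The plan is to reduce the statement to the channel-level estimate of \Cref{thm:ergod} through an exact intertwining between the shift $\theta$ on $\out$ and the channel $\phi$ on $\mathcal D_k$. Concretely, I would first establish that for every $\rho\in\mathcal D_k$ and every bounded $\outalg$-measurable $f$,
$$\ee^\rho(f\circ\theta^N)=\ee^{\phi^N(\rho)}(f),\qquad N\in\nn.$$
It suffices to treat $N=1$ and iterate. For $f=\ind_{O_n}$ the indicator of a cylinder on the first $n$ coordinates, $f\circ\theta$ is the indicator of the same base read on coordinates $2,\ldots,n+1$, so by \eqref{eq_defPrho},
$$\ee^\rho(f\circ\theta)=\int \ind_{O_n}(v_2,\ldots,v_{n+1})\,\tr\big((v_{n+1}\cdots v_2)\,v_1\rho v_1^*\,(v_{n+1}\cdots v_2)^*\big)\,\d\mu\pt{n+1}.$$
The integrand is nonnegative, so Tonelli lets me perform the $v_1$-integral first, and the stochasticity condition \eqref{eq:stochastic family} in the form $\int v_1\rho v_1^*\,\d\mu(v_1)=\phi(\rho)$ collapses this to $\ee^{\phi(\rho)}(\ind_{O_n})$. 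The class of bounded $f$ satisfying $\ee^\rho(f\circ\theta)=\ee^{\phi(\rho)}(f)$ is a vector space stable under bounded monotone limits and contains the cylinder indicators, which generate $\outalg$; a monotone class argument extends the identity to all bounded $\outalg$-measurable $f$, and induction on $N$ gives the displayed relation.

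Next I would record two elementary facts. First, the right-hand side of \eqref{eq_defPrho} is linear in $\rho$, so $\rho\mapsto\ee^\rho(f)$ extends to a linear functional on the real space of Hermitian matrices, $\pp^\sigma$ being a signed measure for Hermitian $\sigma$. Second, writing $\sigma=\sigma_+-\sigma_-$ for its positive and negative parts and using that each $\pp^{\sigma_\pm}$ has total mass $\tr\sigma_\pm$ (since $\phi$ is trace preserving, $\pp^{\sigma_\pm}(\out)=\tr\sigma_\pm$), one gets
$$|\ee^\sigma(f)|\le \|f\|_\infty\,(\tr\sigma_++\tr\sigma_-)=\|f\|_\infty\,\|\sigma\|_1$$
for every Hermitian $\sigma$ and bounded $\outalg$-measurable $f$.

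Finally I would assemble the pieces. By the intertwining relation and linearity,
$$\ee^\rho\Big(\frac1m\sum_{r=0}^{m-1}f\circ\theta^{mn+r}\Big)=\frac1m\sum_{r=0}^{m-1}\ee^{\phi^{mn+r}(\rho)}(f)=\ee^{\sigma_n}(f),\qquad \sigma_n:=\frac1m\sum_{r=0}^{m-1}\phi^{mn+r}(\rho).$$
Since $\phi(\rho\inv)=\rho\inv$ forces $\phi^{mn+r}(\rho\inv)=\rho\inv$, linearity also gives $\ee^{\rho\inv}(f)=\ee^{\sigma'_n}(f)$ with $\sigma'_n=\frac1m\sum_{r=0}^{m-1}\phi^{mn+r}(\rho\inv)=\rho\inv$, so the quantity to bound is exactly $|\ee^{\sigma_n-\rho\inv}(f)|$. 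Applying the total-variation bound and then \Cref{thm:ergod},
$$\Big|\ee^\rho\Big(\tfrac1m\sum_{r=0}^{m-1}f\circ\theta^{mn+r}\Big)-\ee^{\rho\inv}(f)\Big|\le \|f\|_\infty\,\Big\|\tfrac1m\sum_{r=0}^{m-1}\phi^{mn+r}(\rho)-\rho\inv\Big\|_1\le C\,\|f\|_\infty\,\lambda^n,$$
with $C=c$ and $\lambda<1$ furnished by \Cref{thm:ergod}. The only genuinely substantive step is the intertwining $\ee^\rho(f\circ\theta)=\ee^{\phi(\rho)}(f)$; once it is in place, the proposition is just the variational (Kantorovich--Rubinstein-type) reading of the $L^1$ channel estimate, and the remaining points are routine. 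I expect the main care to go into the monotone-class extension from cylinder indicators to general bounded $\outalg$-measurable $f$ and into checking the integrability (finiteness of $\pp^\rho$, nonnegativity for Tonelli) that justifies integrating out the first coordinate, a point that deserves attention because $\mu$ is only assumed to have a finite second moment rather than to be finite.
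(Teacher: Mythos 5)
Your proof is correct, and its first half --- the intertwining $\ee^\rho(f\circ\theta)=\ee^{\phi(\rho)}(f)$ obtained by integrating out the first coordinate via the stochasticity condition, then iterating and averaging over $r$ --- is exactly the paper's argument. Where you diverge is in the final bounding step. The paper invokes the change-of-measure formula of \Cref{lem:conv proj}, writing $\ee^{\phi^{mn+r}(\rho)}(f)=k\,\eech\big(f\,\tr(M_\infty\phi^{mn+r}(\rho))\big)$ and then using $|\tr(M_\infty A)|\leq\|A\|_1$ for Hermitian $A$, which yields the constant $C=ck$ but makes the proposition depend on the martingale convergence of $(M_n)$. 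You instead extend $\sigma\mapsto\pp^\sigma$ linearly to Hermitian matrices, observe that $\pp^{\sigma_\pm}(\out)=\tr\sigma_\pm$ by trace preservation, and deduce the total-variation bound $|\ee^\sigma(f)|\leq\|f\|_\infty\|\sigma\|_1$ directly; applied to $\sigma=\frac1m\sum_r\phi^{mn+r}(\rho)-\rho\inv$ this gives the same estimate with the slightly better constant $C=c$ and, more importantly, without any appeal to $M_\infty$, so your proof is self-contained modulo \Cref{thm:ergod} and the Kolmogorov-extension construction of $\pp^\sigma$ (which you need for each normalized $\sigma_\pm$, but that is the same construction as for $\pp^\rho$). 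Both routes reduce the statement to the Perron--Frobenius decay of \Cref{thm:ergod}; yours is the more elementary of the two, while the paper's version has the side benefit of exhibiting the Radon--Nikodym density explicitly, which it reuses elsewhere. Your attention to Tonelli and to the monotone-class extension is appropriate and matches what the paper leaves implicit.
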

\begin{proof}

We claim that for any bounded $\outalg$-measurable function $f$,
\begin{equation}
\label{eq:prop_cvg_1}
\ee^\rho(f \circ \theta) = \ee^{\phi(\rho)}(f).
\end{equation}
It suffices to prove the relation for all $\outalg_l$-measurable functions for some integer $l$. For such a function,
\begin{align*}
\ee^\rho(f\circ\theta)=&\int_{M_k(\cc)^{l+1}} f(v_{2},\ldots,v_{l+1})\tr(v_{l+1}\ldots v_{1}\rho v_1^*\ldots v_{l+1}^*)\, \d\mu\pt{(l+1)}(v_1,\ldots,v_{l+1})\\
	=&\int_{M_k(\cc)^{l}} f(v_{1},\ldots,v_{l})\tr(v_{l}\ldots v_{1}\phi(\rho)v_{1}^*\ldots v_{l}^*)\, \d\mu\pt{l}(v_{1},\ldots,v_{l}),
\end{align*}
which is equal to $\ee^{\phi(\rho)}(f)$ as claimed.

Applying \Cref{eq:prop_cvg_1} multiple times and using the change of measure of \Cref{lem:conv proj} we obtain 
\begin{align*}
\ee^\rho\left(\frac1m\sum_{r=0}^{m-1} f\circ\theta^{mn+r}\right) &= \frac{1}{m} \sum_{r=0}^{m-1} \ee^{\phi^{nm+r} (\rho)}(f) \\					
&=  k \frac{1}{m} \sum_{r=0}^{m-1} \eech\big(f\,  \tr\big( M_\infty \phi^{nm+r} (\rho)\big)\big),
\end{align*}
for any $\outalg$-measurable function $f$.
Using $|\tr(M_\infty A)|\leq \|A\|_1$ for $A=A^*$ (remark that $M_\infty\in\mathcal D_k$ by construction) we then obtain
\begin{align*}
\left|\ee^\rho\left(\frac1m\sum_{r=0}^{m-1} f\circ\theta^{mn+r}\right)-\ee^{\rho\inv}(f)\right|\leq \|f\|_\infty\, k\left\|\frac1m\sum_{r=0}^{m-1}\phi^{mn+r}(\rho)-\rho\inv\right\|_1
\end{align*}
and \Cref{thm:ergod} yields the proposition with $C=ck$.
\end{proof}

\subsection{Convergence to a ${\outalg}$-measurable process} \label{subsec_cvgFmeas}

Let us introduce two relevant processes: 
for all $n\in\nn$, let
\begin{equation} \label{eq_defyn}
\hat z_{n}(\omega)=\mathop{\mathrm{argmax}}_{\hat x\in \sta}\,\|W_n x\|^2
\end{equation}
and
\begin{equation} \label{eq_defzn}
\hat y_n=W_n\cdot\hat z_n.
\end{equation}
Both random variables $\hat y_n$ and $\hat z_n$ are $\outalg_n$-measurable.

The random variable $\hat z_n$ corresponds to the maximum likelihood estimator of $\hat x_0$. Note that the $\operatorname{argmax}$ may not be uniquely defined. We can, however, define it in an  $\outalg_n$-measurable way. The following results will not be affected by such a consideration, and we will not discuss such questions in the sequel. It follows from the definition of $\hat{z}_n$ that
\begin{equation}
\label{eq:svd2}
(W_n^*W_n)^{\frac12}\, z_n=\|W_n\| z_n, \quad \ppch\as.
\end{equation} 
We recall that $z_n$ is a vector representative of the class $\hat{z}_n$.

Concerning $\hat y_n$, it can be seen as an estimator of $\hat x_n$ given the maximum likelihood estimation of $\hat x_0$. The following proposition establishes consistency of this estimator, we show the geometric contraction in the mean of $(\hat x_n)$ and $(\hat y_n)$. In fact we prove a slightly more general statement that the estimator based on the first $n$ outcomes can be replaced by an estimator based on outcomes in between $l$ and $l+n$.  We will prove the almost sure contraction in \Cref{prop:expo_conv dist}.
\begin{proposition}\label{prop:mean exp conv}
Assume \pur\ holds. Then there exist two positive constants $C$ and $\lambda<1$ such that for any probability measure $\nu$ over $(\sta,\mathcal B)$,
\begin{equation}\label{eq:unifink}
\ee_{\nu}\big(d(\hat x_{n+l}, \hat y_n\circ \theta^l)\big)\leq C\lambda^n,\end{equation}
holds for all non-negative integers $l$ and $n$.
\end{proposition}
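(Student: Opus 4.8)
The plan is to first strip off the shift $\theta^l$ and the arbitrary law $\nu$ by a conditioning argument, reducing to a uniform estimate for deterministic initial conditions. Writing $W_{n+l}=(V_{n+l}\cdots V_{l+1})\,W_l$ and $\hat y_n\circ\theta^l=(V_{n+l}\cdots V_{l+1})\cdot\hat z_n(\theta^l\omega)$, both $\hat x_{n+l}$ and $\hat y_n\circ\theta^l$ are obtained by applying the same block $V_{n+l}\cdots V_{l+1}$ to, respectively, $\hat x_l$ and the maximum-likelihood estimate of $\hat x_l$ built from that block. Using the stochasticity relation \eqref{eq:stochastic family} exactly as in \Cref{prop:marginal}, the conditional law of $(V_{l+1},\dots,V_{l+n})$ given $\jointalg_l$ is $\|(V_{l+n}\cdots V_{l+1})\,x_l\|^2\,\d\mu\pt n$, which is precisely the law of $(V_1,\dots,V_n)$ under $\pp^{\proj_{\hat x_l}}$. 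Hence $\ee_\nu\big(\dist(\hat x_{n+l},\hat y_n\circ\theta^l)\mid\jointalg_l\big)=g(\hat x_l)$ with $g(\hat a):=\ee_{\delta_{\hat a}}\big(\dist(\hat x_n,\hat y_n)\big)$, so that $\ee_\nu\big(\dist(\hat x_{n+l},\hat y_n\circ\theta^l)\big)\le\sup_{\hat a\in\sta}g(\hat a)$ for every $\nu$ and every $l$. It therefore suffices to bound $g(\hat a)$ geometrically, uniformly in $\hat a$.

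For this I would exploit the maximum-likelihood geometry explicitly. Let $s_1\ge s_2\ge\cdots$ be the singular values of $W_n$; by \eqref{eq:svd2} the class $\hat z_n$ is the top right singular direction, so $\hat y_n=W_n\cdot\hat z_n$ is the top left singular direction with $\|W_n z_n\|=s_1$. Decomposing a unit representative $a$ of $\hat a$ as $a=\langle z_n,a\rangle\,z_n+r$ with $r\perp z_n$, the images $W_n z_n$ and $W_n r$ lie in complementary singular directions, hence are orthogonal, and $\|W_n r\|\le s_2\|r\|\le s_2$, whence
\[
\dist(\hat x_n,\hat y_n)^2=\frac{\|W_n r\|^2}{\|W_n a\|^2}\le\frac{s_2^2}{\|W_n a\|^2}.
\]
Since the $\outalg$-marginal of $\pp_{\delta_{\hat a}}$ is $\pp^{\proj_{\hat a}}$, which weights $\d\mu\pt n$ by $\|W_n a\|^2$, this gives
\[
g(\hat a)=\int \dist(W_n\cdot\hat a,\hat y_n)\,\|W_n a\|^2\,\d\mu\pt n\le\int s_2\,\|W_n a\|\,\d\mu\pt n\le\int s_1 s_2\,\d\mu\pt n,
\]
using $\|W_n a\|\le\|W_n\|=s_1$. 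The bound is now independent of $\hat a$, and the factor $\|W_n a\|^2$ coming from the observation law is exactly what cancels the dangerous $\|W_n a\|^{-1}$ produced by the contraction.

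It remains to show that $\int s_1 s_2\,\d\mu\pt n$ decays geometrically. I would read $s_1 s_2=\|\Lambda^2 W_n\|_{\mathrm{op}}$ on the second exterior power and introduce the completely positive map $\Psi(\sigma)=\int(\Lambda^2 v)^*\,\sigma\,(\Lambda^2 v)\,\d\mu(v)$ on operators over $\Lambda^2\cc^k$. Multiplicativity of $\Lambda^2$ gives $\Psi^n(\id)=\int(\Lambda^2 W_n)^*(\Lambda^2 W_n)\,\d\mu\pt n$, so by Cauchy--Schwarz
\[
\int s_1 s_2\,\d\mu\pt n\le\Big(\int\tr\big((\Lambda^2 W_n)^*\Lambda^2 W_n\big)\,\d\mu\pt n\Big)^{1/2}=\big(\tr\Psi^n(\id)\big)^{1/2}.
\]
Setting $a_n=\tr\Psi^n(\id)$, positivity of $\Psi$ yields $a_{n+p}=\tr\Psi^n(\Psi^p(\id))\le\|\Psi^p(\id)\|_{\mathrm{op}}\,a_n\le a_p\,a_n$, so $(a_n)$ is submultiplicative; consequently $\lim_n\tfrac1n\log a_n=\inf_n\tfrac1n\log a_n$, and geometric decay holds as soon as $a_N<1$ for some $N$, that is, as soon as the spectral radius of $\Psi$ is strictly below $1$.

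The main obstacle is thus the equivalence between $\mathrm{spr}(\Psi)<1$ and \pur, and I would argue the needed direction by contraposition. If $\mathrm{spr}(\Psi)=\mathrm{spr}(\Psi^\dagger)\ge1$, where $\Psi^\dagger(\tau)=\int(\Lambda^2 v)\,\tau\,(\Lambda^2 v)^*\,\d\mu(v)$, then by the Perron--Frobenius theorem for positive maps on the cone of positive semidefinite operators of $\Lambda^2\cc^k$ there is a nonzero $\tau\ge0$ with $\Psi^\dagger(\tau)=r\tau$ and $r\ge1$. Comparing ranges in $\int(\Lambda^2 v)\,\tau\,(\Lambda^2 v)^*\,\d\mu=r\tau$ forces $\range\tau$ to be invariant under $\Lambda^2 v$ for $\mu$-a.a.\ $v$, hence under every $\Lambda^2 W_n$; through the correspondence between decomposable bivectors and two-planes, this invariance should yield a two-dimensional subspace on which all the $W_n^*W_n$ act as scalars, i.e.\ a rank-two orthogonal projector $\proj$ with $\proj\,v_1^*\cdots v_n^*v_n\cdots v_1\,\proj\propto\proj$ for $\mu\pt n$-a.a.\ $(v_1,\dots,v_n)$ and all $n$, contradicting \pur. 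Extracting a genuine invariant two-plane from an invariant subspace of $\Lambda^2\cc^k$ is the delicate point where the real work lies; once it is settled, $a_n\to0$, submultiplicativity upgrades this to $a_n\le C\lambda^{2n}$, and combining with the previous paragraphs gives $\ee_\nu\big(\dist(\hat x_{n+l},\hat y_n\circ\theta^l)\big)\le\sup_{\hat a}g(\hat a)\le C\lambda^n$.
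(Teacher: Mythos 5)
Your first two steps are sound and land exactly where the paper does: the conditioning/Markov-property reduction to $l=0$ is the paper's first move, and your singular-value computation $\dist(\hat x_n,\hat y_n)\le a_2(W_n)/\|W_n a\|$, integrated against the density $\|W_n a\|^2\,\d\mu\pt n$, gives precisely $\ee_{\delta_{\hat a}}\big(\dist(\hat x_n,\hat y_n)\big)\le \int a_1(W_n)a_2(W_n)\,\d\mu\pt n$, which is the paper's $f(n)=\int\|\wedge^2W_n\|\,\d\mu\pt n$ of \Cref{eq_deffn}. The trouble is your third step. The Cauchy--Schwarz inequality $\int\|\wedge^2W_n\|\,\d\mu\pt n\le\big(\int\tr((\wedge^2W_n)^*\wedge^2W_n)\,\d\mu\pt n\big)^{1/2}$ is not available here: $\mu$ is only assumed to be a measure with finite second moment satisfying \eqref{eq:stochastic family}, it need not be finite (the paper explicitly warns that $\mu\pt n$ does not even extend to a measure on $\Omega$), so there is no $L^1$-to-$L^2$ comparison with a uniform constant. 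For the same reason your map $\Psi$ need not be well defined, since $\|(\wedge^2v)^*\sigma(\wedge^2v)\|$ is of order $\|v\|^4$ while only $\int\|v\|^2\,\d\mu(v)<\infty$ is assumed; the quantity $f(n)$ itself is finite only because of the bound $a_1(W_n)a_2(W_n)\le\tr(W_n^*W_n)$, whose integral is $k$.

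Second, even granting integrability, the crucial claim $\mathrm{spr}(\Psi)<1$ is exactly where \pur\ must enter, and your contrapositive sketch does not close it: an invariant subspace of $\wedge^2\cc^k$ under the maps $\wedge^2v$ need not be spanned by decomposable bivectors, and even an invariant decomposable line $\cc\, x\wedge y$ only says that the plane $F=\mathrm{span}(x,y)$ is preserved with its area element controlled --- it does not yield $\proj_F v_1^*\cdots v_n^*v_n\cdots v_1\proj_F\propto\proj_F$, which is what negating \pur\ requires (a map can preserve a two-plane and its area form while distorting the metric on it arbitrarily). The paper sidesteps both issues: by \Cref{lem:conv proj}, under \pur\ the martingale $M_n$ converges $\ppch\as$ to a rank-one projection, hence $\|\wedge^2W_n\|/\tr(W_n^*W_n)\to0$ almost surely, and dominated convergence (using $\|\wedge^2W_n\|\le\tr(W_n^*W_n)$) gives $f(n)\to0$; submultiplicativity of $f$ plus Fekete's lemma then upgrades this to $f(n)\le C\lambda^n$. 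You already have the submultiplicativity argument in hand; replacing your spectral-radius step by this martingale argument repairs the proof.
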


In order to prove \Cref{prop:mean exp conv} we study the largest two singular values of $W_n$. As is customary in the study of products of random matrices, we make use of exterior products. We recall briefly the relevant definitions: for $p\in\nn$ and $p$ vectors  $x_1, \ldots, x_p$ in $\cc^k$ we denote by $x_1\wedge \ldots \wedge x_p$ the alternating bilinear form $(y_1,\ldots, y_p)\mapsto \det\big(\langle x_i, y_j\rangle \big)_{i,j=1}^p$. Then, the set of all $x_1\wedge \ldots \wedge x_p$ is a generating family for the set $\wedge^p\cc^k$ of alternating bilinear forms on $\cc^k$, and we can define a hermitian inner product by 
\[\langle x_1\wedge \ldots \wedge x_p, y_1\wedge \ldots \wedge y_p\rangle = \det\big(\langle x_i, y_j\rangle \big)_{i,j=1}^p, \]
and denote by $\|x_1\wedge \ldots \wedge x_p\|$ the associated norm. It is immediate to verify that our metric $\dist$, defined by \eqref{eq:metric} satisfies
\begin{equation}\label{eq:d wedge}
\dist(\hat x,\hat y)=\frac{\|x\wedge y\|}{\|x\|\|y\|}.
\end{equation}
For $A$ an operator on $\mathbb C^k$, we write $\wedge^p X$ for the operator on $\wedge^p\cc^k$ defined by
\begin{equation}\label{eq_defwedgepA}
\wedge^p A \,(x_1\wedge \ldots \wedge x_p)=Ax_1\wedge \ldots \wedge Ax_p.
\end{equation}
Obviously $\wedge^p (AB)=\wedge^p A\wedge^p B$, so that $\|\wedge^p (AB)\|\leq\|\wedge^p A\|\|\wedge^p B\|$. From e.g.\  Chapter XVI of \cite{BMLalgebra} or Lemma III.5.3 of \cite{BouLac}, we have in addition for $1\leq p\leq k$
\begin{equation}\label{eq:wedge_singular val}
\|\wedge^p A\|=a_1(A)\ldots a_p(A),
\end{equation}
where $a_1(A)\geq \ldots \geq a_k(A)$ are the singular values of $A$, i.e.\  the square roots of eigenvalues of $A^* A$, labelled in decreasing order.

Our strategy to prove \Cref{prop:mean exp conv} is to bound the right hand side of \Cref{eq:unifink} by a submultiplicative function $f : \mathbb{N} \to \mathbb{R}_+$ and then use the Fekete's lemma. We will show that the function 
\begin{equation} \label{eq_deffn}
f(n)=\int_{M_k(\cc)^n}  \|\wedge^2 v_n\ldots v_1\|\,\d\mu\pt n(v_1,\ldots,v_n)
\end{equation}
have these desired properties. The following lemma establishes an exponential decay of this function.

\begin{lemma} \label{lemma_submult}
Assume {\pur}. Then there exist two positive constants $C$ and $\lambda<1$ such that
\[f(n)\leq C\lambda^n.\]
\end{lemma}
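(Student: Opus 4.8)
The plan is to exploit the identity $\|\wedge^2 W_n\| = a_1(W_n)\,a_2(W_n)$ from \eqref{eq:wedge_singular val} and to show that the function $f$ in \eqref{eq_deffn} decays geometrically because the purification condition \pur\ forces the second singular value $a_2(W_n)$ to become negligible relative to $a_1(W_n)$. First I would verify that $f$ is submultiplicative: since $\wedge^2(W_{n+p}) = \wedge^2(V_{n+p}\cdots V_{n+1})\,\wedge^2(W_n)$ and the matrices are taken independently under $\mu\pt{n+p}=\mu\pt p\otimes\mu\pt n$ (using the submultiplicativity $\|\wedge^2(AB)\|\le\|\wedge^2 A\|\,\|\wedge^2 B\|$ noted in the text), Tonelli's theorem gives
\begin{equation}\label{eq:submult_f}
f(n+p)=\int \|\wedge^2(V_{n+p}\cdots V_{n+1})\,\wedge^2 W_n\|\,\d\mu\pt{(n+p)}\leq f(p)\,f(n).
\end{equation}
By Fekete's lemma, $\tfrac1n\log f(n)$ converges to $\gamma:=\inf_n \tfrac1n\log f(n)$, and geometric decay $f(n)\le C\lambda^n$ for some $\lambda<1$ will follow as soon as I can exhibit a single index $n_0$ with $f(n_0)<1$; equivalently, it suffices to prove $\gamma<0$, i.e.\ $\lim_n f(n)=0$.

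The heart of the argument is therefore to show $f(n)\to 0$, and here I would connect the integrand to the martingale $M_n=W_n^*W_n/\tr(W_n^*W_n)$ studied in \Cref{lem:conv proj}. Writing the singular values in decreasing order, one has $\|\wedge^2 W_n\|=a_1(W_n)a_2(W_n)$, while $\tr(W_n^*W_n)=\sum_j a_j(W_n)^2$. The two largest eigenvalues of $M_n$ are $a_1(W_n)^2/\tr(W_n^*W_n)\ge a_2(W_n)^2/\tr(W_n^*W_n)$, so that
\begin{equation}\label{eq:wedge_vs_M}
\frac{\|\wedge^2 W_n\|^2}{\tr(W_n^*W_n)^2}=\frac{a_1(W_n)^2\,a_2(W_n)^2}{\tr(W_n^*W_n)^2}\leq \lambda_1(M_n)\,\lambda_2(M_n),
\end{equation}
where $\lambda_1\ge\lambda_2$ are the top two eigenvalues of $M_n$. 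By \Cref{lem:conv proj}, \pur\ implies $M_\infty=\proj_{\hat z}$ is $\ppch\as$ a rank-one projection, so $\lambda_2(M_n)\to 0$ and thus the right-hand side of \eqref{eq:wedge_vs_M} tends to $0$ almost surely. The goal is then to pass from this almost-sure statement to the convergence of $f(n)$ in $L^1$ under the reference measure $\ppch$. Recalling $\d\ppch=\tfrac1k\tr(W_n^*W_n)\,\d\mu\pt n$ on $\outalg_n$, I rewrite
\begin{equation}\label{eq:f_as_expectation}
f(n)=\int \frac{\|\wedge^2 W_n\|}{\tr(W_n^*W_n)}\,\tr(W_n^*W_n)\,\d\mu\pt n = k\,\eech\!\left(\frac{\|\wedge^2 W_n\|}{\tr(W_n^*W_n)}\right)=k\,\eech\!\big(\sqrt{\lambda_1(M_n)\lambda_2(M_n)}\big).
\end{equation}
Since $\lambda_1,\lambda_2\in[0,1]$ the integrand is bounded by $1$, so dominated convergence applies and $f(n)\to k\,\eech(\sqrt{\lambda_1(M_\infty)\lambda_2(M_\infty)})=0$, using $\lambda_2(M_\infty)=0$ $\ppch\as$

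The main obstacle I anticipate is the interchange of limit and integral in \eqref{eq:f_as_expectation} together with the measurability and continuity of the eigenvalue maps $M\mapsto\lambda_i(M)$; these are continuous (eigenvalues depend continuously on the matrix), and the almost-sure convergence $M_n\to M_\infty$ from \Cref{lem:conv proj} transfers to $\lambda_2(M_n)\to\lambda_2(M_\infty)=0$, so the domination by the constant $1$ resolves it cleanly. A secondary technical point is that I must justify passing from $f(n)\to 0$ to the uniform geometric bound: once $f(n_0)=:\beta<1$ for some $n_0$, submultiplicativity \eqref{eq:submult_f} gives $f(qn_0+s)\le f(s)\,\beta^{q}$ for $0\le s<n_0$, and absorbing the finitely many values $f(0),\dots,f(n_0-1)$ into the constant $C$ with $\lambda=\beta^{1/n_0}$ yields $f(n)\le C\lambda^n$ for all $n$, completing the proof.
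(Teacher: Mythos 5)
Your proposal is correct and follows essentially the same route as the paper: rewrite $f(n)=k\,\eech\big(\|\wedge^2 W_n\|/\tr(W_n^*W_n)\big)$, use the a.s.\ convergence of $M_n$ to a rank-one projection under \pur\ together with \eqref{eq:wedge_singular val} and dominated convergence to get $f(n)\to 0$, then combine submultiplicativity with Fekete's lemma to upgrade this to geometric decay. The only cosmetic difference is that your bound $\|\wedge^2 W_n\|/\tr(W_n^*W_n)=\sqrt{\lambda_1(M_n)\lambda_2(M_n)}$ is in fact an equality, which changes nothing.
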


\begin{proof}
First, we prove $\lim_{n\to\infty}f(n)=0$. To prove it, we express the function $f(n)$ using the process $W_n$ as 
\begin{equation} \label{eq_altdeffn}
f(n)=\eech\left(k\frac{\Vert \wedge^2 W_n\Vert}{\tr(W_n^*W_n)}\right).
\end{equation}
By definition the eigenvalues of $M_n^{\frac12}$ are the singular values of $W_n/\sqrt{\tr(W_n^*W_n)}$. Since by Proposition~\ref{lem:conv proj}, $M_n$ converges $\ppch\as$ to a rank one projection,
$$\lim_{n\to\infty}a_1\left(\frac{W_n}{\sqrt{\tr(W_n^*W_n)}}\right)a_2\left(\frac{W_n}{\sqrt{\tr(W_n^*W_n)}}\right)=0\quad \ppch\as$$
Using \Cref{eq:wedge_singular val} we then conclude that 
\begin{equation}
\lim_{n\mapsto\infty}\frac{\Vert \wedge^2W_n\Vert}{\tr(W_n^*W_n)}=0 \quad \ppch\as
\end{equation}
Since  $\|\wedge^2 W_n\|\leq \|W_n\|^2\leq \tr(W_n^*W_n)$, the expression \eqref{eq_altdeffn} and  Lebesgue's dominated convergence theorem imply $\lim_{n\to\infty}f(n)=0$.

\smallskip
Second,  remark that the function $f$ is submultiplicative. Indeed, for $p,q\in\mathbb N$ we have
\[
\|\wedge^2(v_{p+q}\ldots v_{1})\| \leq \|\wedge^2(v_{p+q}\ldots v_{p+1})\|  \|\wedge^2(v_{p}\ldots v_{1})\|
\]
and the submultiplicativity follows.

By Fekete's subadditive Lemma, $\frac{\log f(n)}{n}$ converges to $\inf_{n\in\nn} \frac{\log f(n)}{n}$, which is (strictly) negative (and possibly equal to $-\infty$) since $f(n)\to 0$. Then there exists $0<\lambda<1$ such that $f(n)\leq \lambda^n$ for large enough $n$, and the conclusion follows.
\end{proof}

We are now in position to prove \Cref{prop:mean exp conv}.

\begin{proof}[Proof of \Cref{prop:mean exp conv}]
The Markov property of $(\hat x_n)$ implies that
$$
\ee_{\nu}\big(d(\hat x_{n+l}, \hat y_n\circ \theta^l)\big) = \ee_{\nu\Pi^l}\big(d(\hat x_{n}, \hat y_n)\big).
$$
Provided inequality \eqref{eq:unifink} is established for $l=0$, the right hand side of the previous equality can be bounded by $C \lambda^n$. It is hence sufficient to prove the inequality for $l=0$.

The case $l=0$ follows from \Cref{lemma_submult} if for any $n \in \nn$ and any probability measure $\nu$,
\begin{equation} \label{eq_fundineq}
\mathbb E_{\nu}\big(d(\hat x_n, \hat y_n)\big)\leq f(n).
\end{equation}
To obtain this inequality, note that from the definitions of $\hat x_n$, $\hat y_n$ and $\hat z_n$, we have that
\begin{align*}
\dist(\hat x_n, \hat y_n)&=\frac{\|\wedge^2 W_n\,(x_0\wedge z_n)\|}{\|W_n x_0\|\|W_n z_n\|}\\
&\leq \frac{\|\wedge^2 W_n\|}{\|W_n x_0\|^2}\frac{\|W_n x_0\|}{\|W_n\|}\\
&\leq \frac{\|\wedge^2 W_n\|}{\|W_n x_0\|^2},
\end{align*}
holds $\pp_\nu$-almost surely. To get the first inequality we used $\|W_n z_n\|= \|W_n\|$, and $\|x_0\wedge z_n \|=\dist(\hat x_0,\hat z_n)\leq 1$. In addition, by definition of $\pp_\nu$,
\begin{align*}
\ee_{\nu}\left(\frac{\|\wedge^2 W_n\|}{\|W_n x_0\|^2}\right)&= \int_{\P(\cc^k)\times M_k(\cc)^n}
\frac{\|\wedge^2W_n\|}{\|W_n x_0\|^2}\,\|W_n x_0\|^2\,\d\mu\pt n\, \d\nu(\hat x_0) \\
&= \int_{M_k(\cc)^n}
{\|\wedge^2W_n\|}\,\d\mu\pt n(v_1,\ldots, v_n),
\end{align*}
which is $f(n)$. Therefore \eqref{eq_fundineq} holds and \Cref{lemma_submult} yields the proof.
\end{proof}

\subsection{Convergence in Wasserstein metric}
The remainder of \Cref{sec:speed} is directly devoted to the proof of the second part of Theorem~\ref{thm:uniqueness}.
\begin{proof}[Proof of \Cref{eq:conv with period}]
We are supposed to prove that
$$ W_1\Big(\frac1m\sum_{r=0}^{m-1} \nu\Pi^{mn+r}, \nu\inv\Big) =  \sup_{f\in \Lip_1(\P(\mathbb C^k))} \left| \mathbb{E}_\nu\left( \frac1m\sum_{r=0}^{m-1} f(\hat{x}_{mn+r})\right) - \mathbb{E}_{\nu_{\inv}}(f(\hat{x}_0)) \right|  $$
is exponentially decaying in $n$. The expression in the supremum on the right hand side is unchanged by adding an arbitrary constant to $f$. This freedom allows us to restrict the supremum to functions bounded by $1$, i.e. $\|f\|_\infty \leq 1$.

Let $f\in \Lip_1(\P(\mathbb C^k))$ be such a function. Our strategy is to approximate $\hat{x}_{mn+r}$ by $\hat{y}_{mp} \circ \theta^{mq+r}$ with $p=\lfloor \frac n2 \rfloor$ and $q=\lceil \frac n2\rceil$ so that in particular $p+q =n$. Using telescopic estimates and the invariance of $\nu\inv$ we then have 
\begin{align*}
\left|\ee_\nu \left(\frac1m \sum_{r=0}^{m-1} f(\hat x_{mn +r})\right) - \ee_{\nu\inv}\big(f(\hat x_0)\big)\right|
&\leq \frac1m \sum_{r=0}^{m-1} \big|\ee_\nu \big(f(\hat x_{m(p+q)+r})\big) - \ee_\nu \big(f(\hat y_{mp} \circ \theta^{mq+r})\big)\big| \\
& + \frac1m \sum_{r=0}^{m-1} \big| \ee_{\nu\inv} \big(f(\hat y_{mp} \circ \theta^{mq+r})\big)-\ee_{\nu\inv} \big(f(\hat x_{m(p+q)+r})\big)\big| \\
&  +  \left| \frac1m \sum_{r=0}^{m-1}\ee_\nu \big(f(\hat y_{mp} \circ \theta^{mq+r})\big) - \ee_{\nu\inv} \big(f(\hat y_{mp}) \big)\right|.
\end{align*}
We bound the terms on the right hand side using  \Cref{prop:mean exp conv} for the first two terms and \Cref{prop_cvgFmeasurable} for the last term. To this end let $C$ and $\lambda < 1$ be such that bounds in both these propositions hold true.
Since $f$ is $1$-Lipschitz continuous we have
$$
|f(\hat x_{m(p+q)+r}) - f(\hat y_{mp} \circ \theta^{mq+r})| \leq \dist(\hat x_{m(p+q)+r},\hat y_{mp} \circ \theta^{mq+r}).
$$
\Cref{prop:mean exp conv} then implies that 
$$
 \big|\ee_\nu \big(f(\hat x_{m(p+q)+r})\big) - \ee_\nu \big(f(\hat y_{mp} \circ \theta^{mq+r})\big)\big| \leq C\lambda^{mp},
$$
and similarly with $\nu$ replaced by $\nu\inv$.
Regarding the last term in the above telescopic estimate we have by \Cref{prop_cvgFmeasurable},
$$
\left| \frac1m \sum_{r=0}^{m-1}\ee_\nu \big(f(\hat y_{mp} \circ \theta^{mq+r})\big) - \ee_{\nu\inv} \big(f(\hat y_{mp}) \big)\right|
\leq C \lambda^{q},
$$
where we used the constraint $\|f\|_\infty\leq 1$ discussed at the beginning of the proof.

Putting these estimates together we get
\[
\left|\ee_\nu \left(\frac1m \sum_{r=0}^{m-1} f(\hat x_{mn+r})\right) - \ee_{\nu\inv}\big(f(\hat x_0)\big)\right|\leq 3C\lambda^{\lfloor \frac n2 \rfloor}
\]
and this concludes the proof of \Cref{eq:conv with period} and therefore of \Cref{thm:uniqueness}.
\end{proof}

\section{Lyapunov exponents} \label{sec_Lyapunoventropy}
In this section, we study the almost sure stability exponents. We will always assume \irr\ with the additional assumption that the unique minimal $\supp\mu$-invariant subspace $E$ is $\cc^k$.

\begin{remark}
Assuming $E=\cc^k$ amounts to saying that $\phi$ has no transient part. Without this assumption, we would have to take into account the almost sure Lyapunov exponent corresponding to the escape from the transient part. See \cite{BPT} for a precise account of these ideas. 
\end{remark}

The relevance of this assumption will stem from the following straightforward inequalities: if $\rho$ is any element of $\mathcal D_k$ then one has
\begin{equation*} \label{eq_absco1}
\frac{\d \pp^\rho_{\vert\mathcal O_n}}{\d \mu^{\otimes n}} \leq \|W_n\|^2,
\end{equation*}
%with $\pp^\rho_n$ the projection of $\pp^\rho$ onto $\outalg_n$ 
and if $\rho$ is faithful (i.e.\ definite positive), then 
\begin{equation*} \label{eq_absco2}
\frac{\d\pp^\rho_{\vert\mathcal O_n}}{\d \mu^{\otimes n}} \geq \|\rho^{-1}\|^{-1}\|W_n\|^2.
\end{equation*}
In particular, under the assumption that  \irr\ holds with $E=\cc^k$, thus $\rho\inv>0$ and for any $\rho\in\mathcal D_k$, we have 
\begin{equation}\label{eq:E=cck_ac}
\pp^\rho\ll\pp^{\rho\inv}.
\end{equation}

Let us start by proving the following lemma which concerns ergodicity of $\theta$ w.r.t.\ the measure $\pp^{\rho\inv}$.
\begin{lemma}\label{lem:ppinv_ergodic}
Assume that \irr\ holds. Then the shift $\theta$ on $(\out,\outalg)$ is ergodic with respect to the probability measure $\pp^{\rho\inv}$.
\end{lemma}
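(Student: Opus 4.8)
The plan is to prove that $\theta$ is ergodic with respect to $\pp^{\rho\inv}$ by establishing that $\theta$ is in fact \emph{mixing}, or at least that the tail $\sigma$-algebra is trivial. The natural approach exploits the martingale structure already developed: I would first show that $\theta$ is measure-preserving for $\pp^{\rho\inv}$, which follows immediately from the identity $\ee^\rho(f\circ\theta)=\ee^{\phi(\rho)}(f)$ proved in \Cref{eq:prop_cvg_1}, since $\phi(\rho\inv)=\rho\inv$. The real content is then to show that any $\theta$-invariant event $A\in\outalg$ has $\pp^{\rho\inv}$-measure $0$ or $1$.

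The cleanest route I would take is via the convergence results of \Cref{prop_cvgFmeasurable}. For a bounded $\outalg$-measurable $f$, the Cesàro-type averages $\frac1m\sum_{r=0}^{m-1}\ee^{\rho\inv}(f\circ\theta^{mn+r})$ converge to $\ee^{\rho\inv}(f)$ (indeed, for $\rho=\rho\inv$ the bound gives exact convergence, and when $m=1$ we get genuine mixing $\ee^{\rho\inv}(f\circ\theta^{n})\to\ee^{\rho\inv}(f)\,\ee^{\rho\inv}(1)$ against bounded functions). To turn this convergence of one-sided shifts into ergodicity, I would take $A$ a $\theta$-invariant set and apply the convergence to $f=\ind_A$. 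Since $A$ is invariant, $\ind_A\circ\theta^{j}=\ind_A$ for all $j$, so the Cesàro average is identically $\ind_A$; passing to the limit forces $\pp^{\rho\inv}(A)=\ee^{\rho\inv}(\ind_A)$ to equal a constant $\pp^{\rho\inv}$-almost everywhere, and a $\{0,1\}$-valued constant must be $0$ or $1$. The subtlety here is that \Cref{prop_cvgFmeasurable} controls expectations of $f\circ\theta^{n}$ but not pointwise; so more carefully I would test invariance against an arbitrary bounded $g$, writing
\[
\ee^{\rho\inv}(g\,\ind_A)=\ee^{\rho\inv}\big((g\circ\theta^{n})\,\ind_A\big)
\]
using $\theta$-invariance of both $\ind_A$ and the measure, then average over a block of length $m$ and pass to the limit using the proposition to conclude $\ee^{\rho\inv}(g\,\ind_A)=\ee^{\rho\inv}(g)\,\pp^{\rho\inv}(A)$, which for $g=\ind_A$ yields $\pp^{\rho\inv}(A)=\pp^{\rho\inv}(A)^2$.

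One technical point requires care: the convergence in \Cref{prop_cvgFmeasurable} is for the periodic Cesàro average with period $m$, not for $\ee^{\rho\inv}(g\circ\theta^{n})$ directly, because $\phi$ may have peripheral eigenvalues (the $m$-th roots of unity). When $m>1$ the shift $\theta$ need not be mixing, but it is still ergodic: the averaged convergence is exactly what is needed, since for a $\theta$-invariant $A$ the value of $\ind_A$ along any residue class $r\pmod m$ is the same, so no information is lost by Cesàro-averaging over a full period. I therefore expect the main obstacle to be bookkeeping rather than conceptual — namely, correctly handling the approximation of an arbitrary bounded $\outalg$-measurable test function by $\outalg_\ell$-measurable functions (as done in the proof of \Cref{eq:prop_cvg_1}) and ensuring the limit exchange is justified by dominated convergence, so that the argument applies to all of $\outalg$ and not merely to cylinder events. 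Assembling these pieces gives $\pp^{\rho\inv}(A)\in\{0,1\}$ for every $\theta$-invariant $A$, which is precisely the ergodicity of $\theta$.
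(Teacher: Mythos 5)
Your overall strategy --- establish Ces\`aro decay of correlations on a generating algebra, then test a $\theta$-invariant set $A$ against itself to get $\pp^{\rho\inv}(A)=\pp^{\rho\inv}(A)^2$ --- is the same as the paper's, and the reductions you perform at the two ends (measure preservation via \Cref{eq:prop_cvg_1} with $\phi(\rho\inv)=\rho\inv$; the identity $\ee^{\rho\inv}(g\,\ind_A)=\ee^{\rho\inv}\big((g\circ\theta^{n})\,\ind_A\big)$; the final choice $g=\ind_A$) are all correct. The genuine gap is the central step, where you ``average over a block of length $m$ and pass to the limit using \Cref{prop_cvgFmeasurable} to conclude $\ee^{\rho\inv}(g\,\ind_A)=\ee^{\rho\inv}(g)\,\pp^{\rho\inv}(A)$''. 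That proposition controls only the one-point marginals $\ee^{\rho}\big(\frac1m\sum_r g\circ\theta^{mn+r}\big)$, and applied with $\rho=\rho\inv$ it is vacuous: by \Cref{eq:prop_cvg_1}, $\ee^{\rho\inv}(g\circ\theta^{j})=\ee^{\phi^{j}(\rho\inv)}(g)=\ee^{\rho\inv}(g)$ exactly, for every $j$. (For the same reason your parenthetical claim that the $m=1$ case already gives ``genuine mixing'' is not correct --- mixing is a statement about joint laws, not marginals.) What ergodicity actually requires is the decay of the \emph{correlations} $\ee^{\rho\inv}\big(h\,(g\circ\theta^{j})\big)-\ee^{\rho\inv}(h)\,\ee^{\rho\inv}(g)$ for $h$ an indicator of a cylinder set, and the weight $h$ cannot simply be pulled out of the expectation by anything you cite.

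The missing ingredient is the Markov structure of $\pp^{\rho\inv}$: for $O_\ell\in\outalg_\ell$ and $j\geq\ell$, the definition \eqref{eq_defPrho} gives
\begin{equation*}
\ee^{\rho\inv}\big(\ind_{O_\ell}\,(g\circ\theta^{j})\big)=\int_{O_\ell}\tr\big(W_\ell\,\rho\inv W_\ell^*\big)\;\ee^{\phi^{j-\ell}(\hat\rho_\ell)}(g)\,\d\mu\pt\ell,\qquad \hat\rho_\ell=\frac{W_\ell\,\rho\inv W_\ell^*}{\tr\big(W_\ell\,\rho\inv W_\ell^*\big)},
\end{equation*}
i.e.\ conditionally on $\outalg_\ell$ the shifted future is governed by $\pp^{\hat\rho_\ell}$ for the random updated state $\hat\rho_\ell$, to which one then applies the Perron--Frobenius \Cref{thm:ergod} (or \Cref{prop_cvgFmeasurable}, whose constants are uniform in $\rho$): the Ces\`aro averages of $\phi^{j-\ell}(\hat\rho_\ell)$ converge to $\rho\inv$, and dominated convergence yields $\frac1n\sum_{j<n}\ee^{\rho\inv}\big(\ind_{O_\ell}(g\circ\theta^{j})\big)\to\pp^{\rho\inv}(O_\ell)\,\ee^{\rho\inv}(g)$. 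This is exactly the computation the paper performs (with $g$ itself a cylinder function). With that conditioning step inserted your argument closes; without it, the key limit is unjustified.
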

\begin{proof}
Let $A$, $A'$ in $\outalg_\ell$. From the definition of $\pp^{\rho\inv}$, for $j$ large enough, $\pp^{\rho\inv}\big(A \cap \theta^{-j}(A')\big)$ equals
\begin{align*}
\int_{A\times A'} \tr\Big(v_{\ell}'\ldots v_{1}' \phi^{j-l}\big(v_{\ell}\ldots v_{1} \rho\inv v_{1}^*\ldots v_{\ell}^*\big){v_{1}'}^*\ldots {v_{\ell}'}^* \Big) \,\d\mu\pt\ell(v_1,\ldots,v_\ell)\, \d\mu\pt\ell(v_1',\ldots,v_\ell'),
\end{align*} 
and the Perron--Frobenius \Cref{thm:ergod} implies
\[\lim_{n\to\infty}\frac1n \sum_{j=0}^{n-1}  \phi^j\big(v_{\ell}\ldots v_{1} \rho\inv v_{1}^*\ldots v_{\ell}^*\big) = \tr \big(v_{\ell}\ldots v_{1} \rho\inv v_{1}^*\ldots v_{\ell}^*\big)\, \rho\inv\]
for $\mu^{\otimes l}$-almost all $(v_1,\ldots,v_l)$
so that 
\[\lim_{n\to\infty}\frac1n \sum_{j=0}^{n-1} \pp^{\rho\inv}\big(A \cap \theta^{-j}(A')\big) =\pp^{\rho\inv}(A)\,\pp^{\rho\inv}(A'),\]
which proves the ergodicity.
\end{proof}

Now we can state our result concerning Lyapunov exponents.

\begin{proposition}\label{prop:gamma_p}
Assume that {\irr}\ holds with $E=\cc^k$, and that {\pur}\ holds. Assume $\int\|v\|^2\log \|v\|^2 \d\mu(v)< \infty$. Then there exist numbers $$\infty>\gamma_1\geq \gamma_2\geq \cdots\geq\gamma_k\geq -\infty$$ such that for any probability measure $\nu$ over $(\P(\mathbb C^k),\mathcal B)$:
\begin{enumerate}[label=(\arabic*)]
\item\label{it:lyap1} for any $p\in \{1,\ldots,k\}$,
\begin{equation} \label{eq_defgamma}
\lim_{n\to\infty}\frac1n\log\|\wedge^p W_n\|=\sum_{j=1}^p \gamma_j,\quad \pp_\nu\as,
\end{equation}
\item\label{it:lyap2} $\gamma_2-\gamma_1<0$ with $\gamma_2-\gamma_1$ understood as the limit of $\frac1n\log\frac{\|\wedge^2 W_n\|}{\|W_n\|^2}$ whenever $\gamma_1=-\infty$,
\item \label{it:lyap3}
\begin{equation}\label{eq:gamma_1}
\lim_{n\to\infty} \frac1n(\log\|W_n  x_0\|-\log\|W_n\|)=0\quad \pp_\nu\as
\end{equation}
\end{enumerate}
\end{proposition}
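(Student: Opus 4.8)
The plan is to identify the $\gamma_j$ as the Lyapunov exponents attached to the $p$-fold exterior powers of the products $W_n$, obtained via Kingman's subadditive ergodic theorem applied to the ergodic shift $\theta$ on $(\out,\outalg,\pp^{\rho\inv})$ established in \Cref{lem:ppinv_ergodic}. Concretely, for each fixed $p\in\{1,\ldots,k\}$ set $g_p(\omega)=\log\|\wedge^p W_1\|$ and, more generally, note that $\log\|\wedge^p W_{n+m}\|\leq \log\|\wedge^p(V_{n+m}\ldots V_{n+1})\|+\log\|\wedge^p W_n\|$ by submultiplicativity of $\wedge^p$ together with $\wedge^p(AB)=\wedge^pA\,\wedge^pB$, so that $n\mapsto\log\|\wedge^p W_n\|$ is a subadditive cocycle over $\theta$. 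The integrability hypothesis $\int\|v\|^2\log\|v\|^2\,\d\mu(v)<\infty$ is exactly what guarantees the positive part of $g_p$ is $\pp^{\rho\inv}$-integrable: indeed the $\theta$-shift changes the weight by a factor $\tr(V_1\cdots\rho\inv\cdots V_1^*)$, and since $\|\wedge^p W_1\|\leq\|W_1\|^p\leq\|V_1\|^p$ while $\pp^{\rho\inv}$ has density bounded by $\|W_1\|^2$ against $\mu$, the logarithmic moment condition controls $\ee^{\rho\inv}(g_p^+)$. Kingman's theorem then yields a $\theta$-invariant limit $\frac1n\log\|\wedge^p W_n\|\to\Gamma_p$ $\pp^{\rho\inv}\as$, and ergodicity forces $\Gamma_p$ to be a constant. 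Defining $\gamma_j$ by $\gamma_1+\cdots+\gamma_p=\Gamma_p$ gives a decreasing sequence because $\Gamma_p-\Gamma_{p-1}=\gamma_p$ and the sequence $(\Gamma_p)$ is concave (this is the statement that singular values are ordered, $a_p(A)\leq a_{p-1}(A)$, which passes to the limit). This establishes \ref{it:lyap1} for $\pp^{\rho\inv}$; the extension to arbitrary $\nu$ follows from the absolute continuity \eqref{eq:E=cck_ac}, $\pp^\rho\ll\pp^{\rho\inv}$, which holds precisely because $\rho\inv>0$ under $E=\cc^k$, and then from \eqref{eq:desintegration} disintegrating $\pp_\nu$ over the $\pp^{\pi_{\hat x}}\ll\pp^{\rho\inv}$.

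For part \ref{it:lyap2}, I would combine \ref{it:lyap1} at $p=2$ with \Cref{lemma_submult}. By \eqref{eq:wedge_singular val} we have $\|\wedge^2 W_n\|/\|W_n\|^2=a_2(W_n)/a_1(W_n)$, and \ref{it:lyap1} gives $\frac1n\log\big(\|\wedge^2 W_n\|/\|W_n\|^2\big)\to\gamma_2-\gamma_1$ $\pp_\nu\as$ (with the stated interpretation when $\gamma_1=-\infty$, since then $\Gamma_2-2\Gamma_1$ is still a well-defined limit of the cocycle difference). To see this limit is strictly negative, I would feed \Cref{lemma_submult} into a Borel--Cantelli argument: the lemma bounds $f(n)=\ee^{\rho\inv}\big(k\,\|\wedge^2 W_n\|/\tr(W_n^*W_n)\big)\leq C\lambda^n$ with $\lambda<1$, where the left side is comparable to $\ee^{\rho\inv}\big(\|\wedge^2 W_n\|/\|W_n\|^2\big)$ up to a bounded factor, so by Markov's inequality and summability the event $\|\wedge^2 W_n\|/\|W_n\|^2\geq\lambda^{n/2}$ occurs only finitely often $\pp^{\rho\inv}\as$, forcing $\gamma_2-\gamma_1\leq\tfrac12\log\lambda<0$.

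For part \ref{it:lyap3}, the claim is that the direction of $x_0$ does not affect the top growth rate to first order. The chain of inequalities $\|W_n x_0\|\leq\|W_n\|$ gives the easy direction $\limsup\tfrac1n(\log\|W_n x_0\|-\log\|W_n\|)\leq 0$, so the work is the matching lower bound. Here I would exploit the purification machinery: by \Cref{lemma:polar} and the convergence $M_n^{1/2}\cdot\hat x\to\hat z$ of \eqref{eq:racinem}, the normalized image of $\hat x_0$ aligns with the top singular direction, so $\hat x_0$ is asymptotically not orthogonal to the top right-singular vector $\hat z_n$ of $W_n$. Quantitatively, writing $\|W_n x_0\|^2=\sum_j a_j(W_n)^2|\langle u_j,x_0\rangle|^2$ in the right-singular basis, the dominant term is $a_1(W_n)^2|\langle z_n,x_0\rangle|^2$, and the overlap $|\langle z_n,x_0\rangle|$ is bounded below away from $0$ along the trajectory because $\dist(\hat x_0,\hat z_n)\to 0$ in probability. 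Turning the almost-sure nonorthogonality into the logarithmic statement $\frac1n\log|\langle z_n,x_0\rangle|\to 0$ is the delicate point, since one needs that the overlap does not decay exponentially; this is where I expect the \textbf{main obstacle}. I would control it by observing that $\dist(\hat x_n,\hat y_n)\leq\|\wedge^2 W_n\|/\|W_n x_0\|^2$ (from the proof of \Cref{prop:mean exp conv}) together with part \ref{it:lyap2}: if $\|W_n x_0\|$ decayed too fast relative to $\|W_n\|$, this bound would blow up, contradicting $\dist\leq 1$. Making this rigorous---ruling out exponentially small overlap using the gap $\gamma_1-\gamma_2>0$ and the almost-sure finiteness supplied by Borel--Cantelli---is the heart of the argument, and I would likely run it along the subsequence on which $U_n\to U_\infty$ as in \Cref{lem:conv proj} to keep the singular-vector geometry under control.
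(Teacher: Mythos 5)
Parts \ref{it:lyap1} and \ref{it:lyap2} of your proposal are sound. For \ref{it:lyap1} you follow the paper's route exactly: Kingman's subadditive ergodic theorem over the ergodic system $(\out,\outalg,\pp^{\rho\inv},\theta)$ of \Cref{lem:ppinv_ergodic}, constancy of the limits by ergodicity, and transfer to an arbitrary $\nu$ through $\pp^{\rho_\nu}\ll\pp^{\rho\inv}$ and the $\outalg$-measurability of $\|\wedge^p W_n\|$. For \ref{it:lyap2} you deviate from the paper: you extract strict negativity of $\gamma_2-\gamma_1$ from $f(n)\leq C\lambda^n$ by Markov's inequality and Borel--Cantelli, whereas the paper applies Jensen's inequality to get $\frac1n\log f(n)\geq \eech\big(\frac1n\log(\|\wedge^2W_n\|/\|W_n\|^2)\big)$ and concludes $\gamma_2-\gamma_1\leq\log\lambda$ by Fatou. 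Both work; the paper's version gives the slightly sharper constant, but nothing downstream depends on that. (Minor slip: $f(n)$ is an expectation under $\ppch$, not under $\pp^{\rho\inv}$; this is harmless since $\rho\inv\leq c\,\id_{\cc^k}/k$ makes the density of $\pp^{\rho\inv}$ with respect to $\ppch$ bounded.)

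The genuine problem is part \ref{it:lyap3}. The contradiction you propose --- if $\|W_nx_0\|$ decayed too fast relative to $\|W_n\|$ then the bound $\dist(\hat x_n,\hat y_n)\leq \|\wedge^2W_n\|/\|W_nx_0\|^2$ would blow up, ``contradicting $\dist\leq 1$'' --- is vacuous: an upper bound that exceeds $1$ contradicts nothing, it merely becomes uninformative. Likewise $\dist(\hat x_0,\hat z_n)$ does not tend to $0$ (there is no reason for the maximum-likelihood estimator $\hat z_n$ of $\hat x_0$ to converge to $\hat x_0$ itself), so the overlap is not controlled in the way you state. The obstacle you flag, however, is not actually there. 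By the polar decomposition \eqref{eq:polar} one has the exact identity $\|W_nx_0\|/\|W_n\|=\|M_n^{1/2}x_0\|/\|M_n^{1/2}\|$, and since \pur\ gives $M_n\to\pi_{\hat z}$ $\pp_\nu\as$ (\Cref{lem:conv proj}), this ratio converges $\pp_\nu\as$ to $|\langle x_0,z\rangle|$, which is $\pp_\nu\as$ strictly positive by the disintegration argument in the proof of \Cref{lemma:polar} (the density $k\,\tr(\proj_{\hat x}\proj_{\hat z})$ of $\pp_\nu$ with respect to $\nu\otimes\ppch$ annihilates the event $\langle x_0,z\rangle=0$). Hence $\log\big(\|W_nx_0\|/\|W_n\|\big)$ converges to a finite limit and dividing by $n$ gives \eqref{eq:gamma_1} immediately: no rate on the overlap, no Borel--Cantelli, and no subsequence along which $U_n$ converges is needed.
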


\begin{proof}
 Let us start by proving \ref{it:lyap1}. Note that $n\mapsto \log\|\wedge^p W_n\|$ is subadditive by definition. The existence of the $\pp^{\rho\inv}\as$ limits $\lim_{n\to\infty}\frac1n\log\|\wedge^p W_n\|$ then follows from $\ee^{\rho\inv}(\log\|V\|^2)\leq \int\|v\|^2\log \|v\|^2d\mu(v)<\infty$, $\pp^{\rho\inv}\circ\theta^{-1}=\pp^{\rho\inv}$ and a direct application of Kingman's subadditive ergodic Theorem (see e.g.\ \cite{walters}). The fact that these limits are $\pp^{\rho\inv}\as$ constant comes from the $\theta$-ergodicity of $\pp^{\rho\inv}$ proved in \Cref{lem:ppinv_ergodic}. Since by \Cref{eq:E=cck_ac} any $\pp^\rho$ is absolutely continuous with respect to $\pp^{\rho\inv}$,
 \Cref{prop:marginal} and the $\outalg$-measurability of $\|\wedge^pW_n\|$ imply the convergence holds $\pp_\nu\as$ The numbers $\gamma_j$ are uniquely defined, by defining $\sum_{j=1}^p \gamma_j$ as the $\pp^{\rho\inv}\as$ limit $\lim_{n\to\infty}\frac1n\log\|\wedge^p W_n\|$ and imposing the rule that $\gamma_{j+1}=-\infty$ if $\gamma_{j}=-\infty$. This convention and \eqref{eq:wedge_singular val} impose that $\gamma_{j+1}\leq \gamma_j$ for $j=1,\ldots,k-1$.

 Concerning \ref{it:lyap2}, recall the quantity $f(n)$ defined in \Cref{eq_deffn}. Then \Cref{eq_altdeffn} and the inequality $\tr\,W_n^*W_n \leq k \|W_n\|^2$ give
\[f(n)\geq \eech\left(\frac{\|\wedge^2 W_n\|}{\|W_n\|^2} \right). \]
Jensen's inequality implies
\[\frac1n \log f(n)\geq \eech\left(\frac1n \log\frac{\|\wedge^2 W_n\|}{\|W_n\|^2} \right) \]
so that by \Cref{lemma_submult} and Fatou's lemma, $\log \lambda \geq \gamma_2-\gamma_1$ with $\lambda\in(0,1)$.

 Finally for \ref{it:lyap3}, from \Cref{lem:conv proj}, we have
\[\lim_{n\to\infty}\frac{\|W_n x_0\|}{\|W_n\|}=\lim_{n\to\infty}\frac{\|M_n^{\frac12}x_0\|}{\|M_{n}^{\frac12}\|}=|\langle x_0, z\rangle|\quad \pp_\nu\as\]
Since $\pp_\nu\as$ $|\langle x_0, z\rangle|>0$ the proposition holds.

\end{proof}

From this proposition we deduce the following almost sure convergence rate for the distance between the Markov chain $(\hat x_n)$ and the $(\outalg_n)$-adapted process $(\hat y_n)$.
\begin{proposition}\label{prop:expo_conv dist}
Assume \pur\ holds and \irr\ holds with $E=\cc^k$. Then for any probability measure $\nu$ on $(\sta,\mathcal B)$,
\[\limsup_{t\to\infty}\frac1n\log\big(d(\hat x_n, \hat y_n)\big)\leq -(\gamma_1-\gamma_2)<0,\quad \pp_\nu\as\]
\end{proposition}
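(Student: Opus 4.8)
The plan is to establish the almost sure exponential contraction by reusing the pointwise bound on the distance $\dist(\hat x_n,\hat y_n)$ that was already obtained in the proof of \Cref{prop:mean exp conv}, and then replacing the expectation-based argument there by a direct almost sure asymptotic analysis using the Lyapunov exponents from \Cref{prop:gamma_p}. Recall from that earlier computation that, $\pp_\nu$-almost surely,
\[
\dist(\hat x_n,\hat y_n)\leq \frac{\|\wedge^2 W_n\|}{\|W_n x_0\|^2}.
\]
Taking logarithms and dividing by $n$, it therefore suffices to control the almost sure asymptotics of $\frac1n\log\|\wedge^2 W_n\|$ and of $\frac1n\log\|W_n x_0\|^2$ separately.

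First I would treat the numerator: by \Cref{prop:gamma_p}\ref{it:lyap1} applied with $p=2$, we have $\frac1n\log\|\wedge^2 W_n\|\to \gamma_1+\gamma_2$, $\pp_\nu$-almost surely (with the usual convention if $\gamma_1=-\infty$). Next I would handle the denominator by writing $\|W_n x_0\|=\|W_n\|\cdot\frac{\|W_n x_0\|}{\|W_n\|}$, so that
\[
\frac1n\log\|W_n x_0\|^2=\frac2n\log\|W_n\|+\frac2n\log\frac{\|W_n x_0\|}{\|W_n\|}.
\]
The first summand converges to $2\gamma_1$ by \Cref{prop:gamma_p}\ref{it:lyap1} with $p=1$ (since $\|W_n\|=\|\wedge^1 W_n\|=a_1(W_n)$), while the second summand converges to $0$ by \Cref{prop:gamma_p}\ref{it:lyap3}. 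Hence $\frac1n\log\|W_n x_0\|^2\to 2\gamma_1$, $\pp_\nu$-almost surely. Combining the two estimates gives
\[
\limsup_{n\to\infty}\frac1n\log\dist(\hat x_n,\hat y_n)\leq (\gamma_1+\gamma_2)-2\gamma_1=\gamma_2-\gamma_1=-(\gamma_1-\gamma_2),
\]
and \Cref{prop:gamma_p}\ref{it:lyap2} guarantees this is strictly negative, which is exactly the claim.

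The main delicacy to address carefully is the bookkeeping in the degenerate case $\gamma_1=-\infty$, where individual terms like $\frac2n\log\|W_n\|$ and $\frac1n\log\|\wedge^2 W_n\|$ may each tend to $-\infty$ and their difference is an indeterminate $-\infty-(-\infty)$. This is precisely why \Cref{prop:gamma_p}\ref{it:lyap2} was stated with the limit $\frac1n\log\frac{\|\wedge^2 W_n\|}{\|W_n\|^2}$ understood directly as $\gamma_2-\gamma_1$ in that case. The clean way to avoid the indeterminacy altogether is to work with the single ratio from the start: using $\|W_n x_0\|\leq\|W_n\|$ in the denominator one gets
\[
\dist(\hat x_n,\hat y_n)\leq \frac{\|\wedge^2 W_n\|}{\|W_n x_0\|^2}=\frac{\|\wedge^2 W_n\|}{\|W_n\|^2}\cdot\frac{\|W_n\|^2}{\|W_n x_0\|^2},
\]
so that $\frac1n\log\dist(\hat x_n,\hat y_n)\leq \frac1n\log\frac{\|\wedge^2 W_n\|}{\|W_n\|^2}-\frac2n\log\frac{\|W_n x_0\|}{\|W_n\|}$. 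The first ratio converges to $\gamma_2-\gamma_1$ (finite and negative) by \Cref{prop:gamma_p}\ref{it:lyap2}, and the subtracted term vanishes by \Cref{prop:gamma_p}\ref{it:lyap3}; this formulation never produces an ill-defined difference and yields the stated $\limsup$ bound in all cases.
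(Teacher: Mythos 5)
Your proof is correct and follows essentially the same route as the paper: bound $\dist(\hat x_n,\hat y_n)$ by a ratio involving $\|\wedge^2 W_n\|$ via the exterior-product identity, then invoke the three parts of \Cref{prop:gamma_p}. The paper uses the marginally sharper bound $\|\wedge^2 W_n\|/(\|W_n x_0\|\,\|W_n\|)$ where you use $\|\wedge^2 W_n\|/\|W_n x_0\|^2$, but this is immaterial since the extra factor $\|W_n\|/\|W_n x_0\|$ is subexponential by \Cref{prop:gamma_p}\ref{it:lyap3}, and your explicit handling of the $\gamma_1=-\infty$ case via the single ratio is exactly what \Cref{prop:gamma_p}\ref{it:lyap2} is designed for.
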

\begin{proof}
Identity \eqref{eq:d wedge} and the definition of $\hat z_n$ imply
\[{d(\hat x_n,\hat y_n)}=\frac{\|\wedge^2W_n\, x_0\wedge z_n\|}{\|W_nx_0\|\|W_n z_n\|} \leq \frac{\|\wedge^2W_n\|}{\|W_nx_0\|\|W_n\|}.\]
\Cref{prop:gamma_p} then yields the result.
\end{proof}

\paragraph{\textbf{Acknowledgments}} 
T.B. and C.P. would like to thank Y. Guivarc'h for his useful comments at an early stage of this work. Y.P. and C.P. would like to thank P. Bougerol for enlightening discussions about random products of matrices. Y.P. and C.P. would like to thank L. Miclo for relevant discussions regarding Markov chains. The research of T.B., Y.P. and C.P. has been supported by the ANR project StoQ ANR-14-CE25-0003-01 and CNRS InFIniTi project MISTEQ.

\appendix
\section{Equivalence of {\bf (Pur)} and contractivity}\label{app:contractivity_iff_pur}
We assume $\supp \mu\subset GL_k(\cc)$. Recall that $T_\mu$  is the smallest closed sub-semigroup of $GL_k(\cc)$ that contains $\operatorname{supp}\mu$. It is said to be contracting if there exists a sequence $(a_n)_{n\in\mathbb N}\subset T_\mu$ such that $\lim_{n\to\infty} a_n/\|a_n\|$ exists and is a rank one matrix.
\begin{proposition}
Assume $\supp\mu\subset GL_k(\mathbb C)$ and $T_\mu$ is strongly irreducible. Then $\mu$ verifies \pur\ iff. $T_\mu$ is contracting.
\end{proposition}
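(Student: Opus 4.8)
The plan is to prove both implications by translating each of the two conditions into a statement about the two largest singular values $a_1(g)\ge a_2(g)$ of the elements $g\in T_\mu$. Writing $g/\|g\|$ in singular value decomposition, one sees that, along a subsequence, $g_n/\|g_n\|$ converges to a rank one matrix exactly when $a_2(g_n)/a_1(g_n)\to0$; by compactness of the unit sphere of $M_k(\cc)$, $T_\mu$ is therefore contracting if and only if $\inf_{g\in T_\mu}a_2(g)/a_1(g)=0$, and by \eqref{eq:wedge_singular val} this ratio equals $\|\wedge^2 g\|/\|g\|^2$ since $\|g\|=a_1(g)$. On the other side, I would reformulate \pur\ in terms of \emph{conformal} subspaces: an orthogonal projector $\proj$ satisfies $\proj\,g^*g\,\proj\propto\proj$ precisely when $g$ restricts to a scalar multiple of an isometry on $F:=\range\proj$. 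The condition $\proj g^* g\proj\propto\proj$ is closed in $g$ and, holding for $\mu^{\otimes n}$-almost every tuple, it holds for every tuple in $(\supp\mu)^n$ (an open null set misses the support $(\supp\mu)^n$ of $\mu^{\otimes n}$); by closedness it then passes to the whole of $T_\mu$. Thus a failure of \pur\ yields a subspace $F$ with $\dim F=d\ge2$ on which \emph{every} element of $T_\mu$ acts conformally.

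For \pur$\,\Rightarrow\,$contracting I would invoke \Cref{lem:conv proj} directly; this direction uses neither invertibility nor strong irreducibility. Choosing $\nu$ with $\rho_\nu=\id_{\cc^k}/k$ so that the $\outalg$-marginal of $\pp_\nu$ is $\ppch$, \Cref{lem:conv proj} gives that $M_n=W_n^*W_n/\tr(W_n^*W_n)$ converges $\ppch$-a.s.\ to a rank one projection. As the eigenvalues of $M_n$ are $a_j(W_n)^2/\sum_i a_i(W_n)^2$, this forces $a_2(W_n)/a_1(W_n)\to0$ $\ppch$-a.s. Since $\ppch$ is carried by sequences $\omega=(v_1,v_2,\dots)$ with all $v_i\in\supp\mu$, the matrices $W_n(\omega)=v_n\cdots v_1$ lie in $T_\mu$, and any such $\omega$ provides a sequence in $T_\mu$ with $a_2/a_1\to0$; hence $T_\mu$ is contracting.

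For the converse I would argue by contradiction, and here strong irreducibility enters. Assume $T_\mu$ contracting, fix $g_n\in T_\mu$ with $g_n/\|g_n\|\to r$ of rank one, set $H=\ker r$ (a hyperplane), and suppose \pur\ fails, giving a conformal subspace $F$, $\dim F=d\ge2$. The key dichotomy is that any conformal $d$-subspace $F'$ must lie in $H$: restricting $g_n/\|g_n\|$ to $F'$ gives $(s_n/\|g_n\|)J_n$ with $J_n$ an isometry of $F'$ and $s_n=\|g_n|_{F'}\|$; extracting $s_n/\|g_n\|\to t$ and $J_n\to J$ yields $r|_{F'}=tJ$, which has rank $d\ge2$ unless $t=0$, so $t=0$ and $F'\subseteq H$. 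I would then note that conformal $d$-subspaces are stable under $T_\mu$: for $g\in T_\mu$ one has $h|_{gF}=(hg)|_{F}\circ(g|_{F})^{-1}$ for every $h\in T_\mu$, a composition of scaled isometries, so $gF$ is again conformal, of dimension $d$ since $g$ is invertible. Combining the two facts, every $gF$ is a conformal $d$-subspace, hence $gF\subseteq H$ for all $g\in T_\mu$; therefore $V:=\operatorname{span}\bigcup_{g\in T_\mu}gF$ is a nonzero $T_\mu$-invariant subspace contained in $H$, thus proper. Being $\supp\mu$-invariant, this single proper subspace contradicts strong irreducibility, so \pur\ must hold.

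The main obstacle is precisely this converse: contractivity alone only pushes conformal subspaces into the kernel hyperplane $H$ (the case $t=0$), which is not by itself contradictory, so the argument must upgrade one conformal subspace into a genuine $T_\mu$-invariant proper subspace. The two ingredients that achieve this—the $T_\mu$-stability of the family of conformal subspaces and the preservation of their dimension under invertible maps—are exactly where the hypotheses $\supp\mu\subset GL_k(\cc)$ and strong irreducibility are indispensable; without strong irreducibility the span $V$ could be all of $\cc^k$ and no contradiction would arise.
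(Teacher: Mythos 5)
Your proof is correct, and the forward direction (\pur\ $\Rightarrow$ contractivity via \Cref{lem:conv proj}: the $\ppch$-almost sure convergence of $M_n$ to a rank-one projection forces $a_2(W_n)/a_1(W_n)\to 0$ along a $\ppch$-typical trajectory, whose partial products lie in $T_\mu$) is essentially identical to the paper's one-line argument. The converse is where you genuinely diverge. The paper invokes Lemma 3 of \cite{GuiRau86}, which under strong irreducibility recasts contractivity as: for every pair $\hat x,\hat y\in\sta$ there is a sequence $(a_n)\subset T_\mu$ with $\dist(a_n\cdot\hat x,a_n\cdot\hat y)\to0$; a conformal subspace of rank $\geq 2$ then supplies an orthonormal pair $x,y$ with $\dist(a\cdot\hat x,a\cdot\hat y)=1$ for every $a\in T_\mu$, an immediate contradiction. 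You instead argue intrinsically: the failure of \pur\ propagates from $\mu^{\otimes n}$-a.e.\ tuples to all of $T_\mu$ (the proportionality condition is closed, so its open complement misses $(\supp\mu)^n$ and the condition survives closure of the semigroup); any conformal subspace of dimension $\geq 2$ is forced into the hyperplane $\ker r$ by the rank-one limit $r$; conformality and dimension are preserved under the invertible elements of $T_\mu$; and the span of the orbit of $F$ is then a nonzero proper $\supp\mu$-invariant subspace, contradicting strong irreducibility. Both arguments are sound; the paper's is shorter but outsources the real work to the Guivarc'h--Raugi characterization, while yours is self-contained and makes visible exactly where invertibility (preservation of the dimension of conformal subspaces) and strong irreducibility (exclusion of a proper invariant subspace) enter. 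One small inaccuracy in your closing discussion: under contractivity the span $V$ is always contained in $\ker r$ and hence always proper, so it is not that $V$ ``could be all of $\cc^k$'' without strong irreducibility --- rather, a proper invariant subspace would simply no longer be a contradiction. This does not affect the validity of the proof itself.
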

\begin{proof}
By \Cref{lem:conv proj} the implication {\pur} $\implies$ contractivity follows by taking $(a_n)$ a convergent subsequence of $(W_n(\omega))$ for $\omega\in\operatorname{supp}\ppch$.

We prove the opposite implication by contradiction. Following %\cite[Proposition III.4.4]{BouLac}
\cite[Lemma 3]{GuiRau86}, under the assumptions of the proposition, $T_\mu$ is contracting iff. for any two $\hat x, \hat y\in\sta$ there exists a sequence of matrices $(a_n)\subset T_\mu$ such that
$$\lim_{n\to\infty}d(a_n\cdot \hat x,a_n\cdot \hat y)=0.$$
Now, assume that contractivity holds but {\pur} does not. Namely, that $T_\mu$ is contracting but there exists an orthogonal projector $\proj$ of rank $\geq2$, such that for any $a\in T_\mu$,
$$\proj a^*a\proj\propto \proj.$$
Let $x,y\in\operatorname{range}\pi$ be orthonormal vectors. Then $\langle ax,ay\rangle=\langle x,y\rangle=0$, and $\|ax\|, \|ay\|$ are nonzero, so that $\dist(a\cdot \hat x,a\cdot \hat y)=1$. As this holds for any $a\in T_{\mu}$, contractivity cannot hold. This contradiction yields the proposition.
\end{proof}

\section{Set of invariant measures under assumption {\bf (Pur)}}\label{app:pur_decompo_unique}
A quantum channel is a map $\phi$ on $M_k(\cc)$ of a form
$$
\phi(\rho) = \int_{M_k(\mathcal{C})} v \rho v^* \d \mu(v),
$$
where $\mu$ is a measure satisfying the normalization condition (\ref{eq:stochastic family}). The decomposition of quantum channels to irreducible components was derived in \cite{wolftour,CarbonePautrat,Baumgartner}. The space $\mathbb{C}^k$ is decomposed into orthogonal subspaces, one subspace is transient and in all other subspaces the map has a canonical tensor product structure.  We recall these results.

There exists a decomposition
$$
\mathbb{C}^k  \simeq \mathbb{C}^{n_1} \oplus \dots \oplus \mathbb{C}^{n_d} \oplus \mathbb{C}^{D}, \quad k = n_1 + \dots + n_d + D
$$
with the following properties. We denote by $v_j$ the restriction of $v$ to $\mathbb{C}^{n_j}$. 
\begin{enumerate}[label=(e\arabic*)]
\item\label{it:e1} All invariant states are supported in the subspace $L = \mathbb{C}^{n_1} \oplus \dots \oplus \mathbb{C}^{n_d} \oplus 0$,
\item\label{it:e2} The restriction of $v$ to this subspace is block diagonal,
\begin{equation}
\label{eq:e2}
v|_L = v_1 \oplus \dots \oplus v_d \quad \mu\ae
\end{equation}
\item\label{it:e3} For each  $j=1, \dots ,d$ there is a decomposition $\mathbb{C}^{n_j} = \mathbb{C}^{k_j} \otimes \mathbb{C}^{m_j}, \, n_j = k_j m_j$, a unitary matrix $U_j$ on $\mathbb{C}^{n_j}$  and a matrix $\tilde{v}_j$ on $\mathbb{C}^{k_j}$ such that
\begin{equation}
\label{eq:e3}
v_j  = U_j (\tilde{v}_j \otimes \id_{\cc^{m_j}}) U_j^* \quad \mu-a.s.
\end{equation}
\item\label{it:e4} There exists a full rank positive matrix $\rho_j$ on $\mathbb{C}^{k_j}$ such that
\begin{equation}
0 \oplus \dots \oplus U_j (\rho_j \otimes \id_{\cc^{m_j}}) U_j^* \oplus \dots \oplus 0 
\end{equation}
is a fixed point of $\phi$.
\end{enumerate}

It follows from \ref{it:e3} and \ref{it:e4} that the set of fixed points for $\phi$ is
\begin{equation*}
U_1\big(\rho_1\otimes M_{m_1}(\cc)\big)U_1^*\oplus\ldots \oplus U_d\big(\rho_d\otimes M_{m_d}(\cc)\big)U_d^*\oplus0_{M_D(\cc)}.
\end{equation*}
The decomposition simplifies under the purification assumption.
\begin{proposition}\label{lem:phi_FP}
Assume \pur\ holds. Then there exists a set $\{\rho_j\}_{j=1}^d$ of positive definite matrices and an integer $D$ such that the set of $\phi$ fixed points is
$$\cc\rho_1\oplus\cdots\oplus\cc\rho_d\oplus 0_{M_D(\cc)}.$$
\end{proposition}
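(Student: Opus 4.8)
The plan is to deduce the statement from the general structure of the fixed-point set recalled in \ref{it:e1}--\ref{it:e4}, namely that the set of $\phi$-fixed points equals
\[
U_1\big(\rho_1\otimes M_{m_1}(\cc)\big)U_1^*\oplus\ldots \oplus U_d\big(\rho_d\otimes M_{m_d}(\cc)\big)U_d^*\oplus0_{M_D(\cc)},
\]
by showing that assumption \pur\ forces every multiplicity $m_j$ to equal $1$. Once this is established, $\rho_j\otimes M_{m_j}(\cc)=\rho_j\otimes M_1(\cc)=\cc\rho_j$, each $U_j$ becomes a unitary on $\cc^{n_j}=\cc^{k_j}$, and relabelling $U_j\rho_jU_j^*$ (still positive definite) as $\rho_j$ yields precisely $\cc\rho_1\oplus\cdots\oplus\cc\rho_d\oplus0_{M_D(\cc)}$. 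The integers $d$ and $D$ are those provided by the decomposition.

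The heart of the argument is to exhibit, whenever some $m_j\geq 2$, an orthogonal projector of rank $m_j$ that satisfies the hypothesis of \pur, contradicting its conclusion. Fix such a $j$, pick a unit vector $\xi\in\cc^{k_j}$, and let $\proj$ be the orthogonal projector of $\cc^k$ onto $U_j\big(\xi\otimes\cc^{m_j}\big)$, extended by $0$ on the remaining blocks and on the transient part $\cc^D$; thus $\proj=U_j(\proj_\xi\otimes\id_{\cc^{m_j}})U_j^*$ with $\proj_\xi$ the rank-one orthogonal projector onto $\cc\xi$, and $\rank\proj=m_j$. The range of $\proj$ lies in the invariant block $\cc^{n_j}\subset L$, so by \ref{it:e2} each $v_i$ maps $\range\proj$ inside $\cc^{n_j}$, and by \ref{it:e3} the product $v_n\ldots v_1$ acts there as $U_j\big(\tilde v_j^{(n)}\ldots\tilde v_j^{(1)}\otimes\id_{\cc^{m_j}}\big)U_j^*$. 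Writing $T=\tilde v_j^{(n)}\ldots\tilde v_j^{(1)}$ and using $\proj_\xi A\proj_\xi=\langle\xi,A\xi\rangle\,\proj_\xi$, I would compute
\[
\proj\, v_1^*\ldots v_n^*v_n\ldots v_1\,\proj
= U_j\big(\proj_\xi T^*T\proj_\xi\otimes\id_{\cc^{m_j}}\big)U_j^*
= \|T\xi\|^2\,\proj,
\]
which is $\propto\proj$ for every $n$ and $\mu^{\otimes n}$-almost all $(v_1,\ldots,v_n)$. Hence \pur\ forces $\rank\proj=m_j=1$.

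The displayed computation is routine once the correct projector is chosen; the point I expect to require the most care is the bookkeeping around the decomposition, namely verifying that \ref{it:e2} genuinely guarantees that products of the $v_i$ preserve each block $\cc^{n_j}$ (so that only the restriction $v_j$ matters on $\range\proj$), and that the factor $\id_{\cc^{m_j}}$ in \ref{it:e3} is exactly what makes $\proj\,v_1^*\ldots v_n^*v_n\ldots v_1\,\proj$ a scalar multiple of $\proj$ irrespective of the $v_i$. This is the conceptual content of the argument: a multiplicity space $\cc^{m_j}$ of dimension $\geq2$ is dynamically invisible and therefore obstructs purification. With all $m_j=1$ in hand, absorbing the unitaries $U_j$ into the $\rho_j$ completes the proof.
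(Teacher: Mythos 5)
Your proposal is correct and follows essentially the same route as the paper: both reduce the statement to showing that \pur\ forces $m_1=\cdots=m_d=1$, and both do so by exhibiting the rank-$m_j$ projector $U_j(\proj_{\hat\xi}\otimes\id_{\cc^{m_j}})U_j^*\oplus 0\oplus\cdots\oplus 0$ and computing that it satisfies the hypothesis of \pur. Your version is marginally more careful in spelling out the $n$-fold product case via \ref{it:e2} and \ref{it:e3} (the paper only displays the $n=1$ computation), but the argument is identical in substance.
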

\begin{proof}
The statement follows from the discussion preceding the proposition if we show that {\pur} implies $m_1 = \dots = m_d =1$. Assume that one of the $m_j$, e.g.\ $m_1$, is greater than $1$. Let $x$ be a norm one vector in $\mathbb{C}^{k_1}$. Then $\pi = U_j\pi_{\hat{x}} \otimes \id_{\mathbb{C}^{m_1}} U_j^*\oplus 0 \oplus \dots \oplus0$ is a projection with rank bigger than $1$, and by \Cref{eq:e3} we have
$$
 \pi v^* v \pi  = ||\tilde{v}_1 x||^2 \pi
$$
for $\mu$-almost all $v$. This contradicts {\pur}.
\end{proof}

It is clear from \Cref{eq:e2} that to each extremal fixed point $0 \oplus \dots  \oplus \rho_j \oplus \dots \oplus0$ corresponds a unique invariant measure $\nu_j$ supported on its range $F_j$.  The converse is the subject of the next proposition.
\begin{proposition}
Assume \pur\ holds. Then any $\Pi$-invariant probability measure is a convex combination of the measures $\nu_j$, $j=1,\ldots,d$.
\end{proposition}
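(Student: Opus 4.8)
The plan is to prove that any $\Pi$-invariant probability measure $\nu$ is carried by the disjoint union $\bigsqcup_{j=1}^d\P(F_j)$, and then to identify its restriction to each block with $\nu_j$. First I would observe, exactly as in the proof of \Cref{prop:marginal}, that $\rho_\nu=\ee_\nu(\proj_{\hat x})$ is a fixed point of $\phi$; by \Cref{lem:phi_FP} it is therefore supported in $L:=F_1\oplus\cdots\oplus F_d$. Denoting by $P_L$ and $P_j$ the orthogonal projectors onto $L$ and $F_j$, the identity $0=\tr\big((\id-P_L)\rho_\nu\big)=\int\|(\id-P_L)x\|^2\,\d\nu(\hat x)$ forces $\nu$ to be carried by $\P(L)$. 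On $\P(L)$ I introduce the continuous block weights $q_j(\hat x)=\tr(\proj_{\hat x}P_j)=\|P_jx\|^2$, which are nonnegative and satisfy $\sum_{j=1}^d q_j\equiv1$; the whole problem then reduces to showing that $q_j\in\{0,1\}$ holds $\nu$-almost surely, i.e.\ that $\nu$ gives no mass to points of $\P(L)$ spread across several blocks.

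The core of the argument is to run the purification machinery of \Cref{lem:conv proj} and \Cref{lemma:polar} on the dynamics restricted to the invariant subspace $L$. For $\hat x_0\in\P(L)$ one has $W_nx_0=(W_n|_L)x_0$, and by \eqref{eq:e2} this restriction is block diagonal, $W_n|_L=\bigoplus_{j=1}^d W_{n,j}$. Restricting \eqref{eq:stochastic family} to $L$ gives $\int (v|_L)^*(v|_L)\,\d\mu=\id_L$, and {\pur} passes to the $L$-dynamics because for any projector $\proj\le P_L$ the purification identity computed with $\{v|_L\}$ coincides with the one computed with $\{v\}$. Since for $\hat x_0\in\P(L)$ the trajectory $(\hat x_n)$ and the weights defining $\pp_\nu$ only involve these block-diagonal restrictions, \Cref{lem:conv proj} applies verbatim and shows that $M_n^L:=(W_n|_L)^*(W_n|_L)/\tr\big((W_n|_L)^*(W_n|_L)\big)$ converges $\pp_\nu\as$ to a rank one projector $\proj_{\hat w}$; being block diagonal and of rank one, $\hat w$ lies in a single, $\outalg$-measurable block $\P(F_{j^*})$. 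The decisive observation is that the polar decomposition of the block-diagonal $W_n|_L$ is itself block diagonal, so the polar unitaries $U_n^L=\bigoplus_j U_{n,j}$ preserve each $F_j$; hence $U_n^L\cdot\hat w\in\P(F_{j^*})$ and $q_j(U_n^L\cdot\hat w)=\ind_{j=j^*}$. Applying \Cref{lemma:polar} to the $L$-dynamics yields $d(\hat x_n,U_n^L\cdot\hat w)\to0$ $\pp_\nu\as$, and since $q_j$ is uniformly continuous on the compact space $\P(\cc^k)$ I conclude that $q_j(\hat x_n)\to\ind_{j=j^*}\in\{0,1\}$, $\pp_\nu\as$

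It then remains to exploit invariance. As $\nu$ is $\Pi$-invariant, $\hat x_n$ is $\nu$-distributed for every $n$, so the law of $q_j(\hat x_n)$ equals $\nu\circ q_j^{-1}$ independently of $n$; passing to the limit with the almost sure convergence above shows that $\nu\circ q_j^{-1}$ is supported on $\{0,1\}$. Hence $q_j\in\{0,1\}$ $\nu$-a.s.\ for each $j$, and as $\sum_j q_j=1$ exactly one weight equals $1$, so $\nu$ is carried by $\bigsqcup_j\P(F_j)$. Writing $p_j=\nu\big(\P(F_j)\big)$ and, for $p_j>0$, $\nu_j'=\nu(\,\cdot\mid\P(F_j))$, the invariance of $\nu$ together with the fact that $\Pi$ leaves each $\P(F_j)$ invariant (by \eqref{eq:e2}, $vF_j\subseteq F_j$ $\mu\ae$) — a one-line check on test functions supported in a single block — shows that each $\nu_j'$ is a $\Pi$-invariant probability measure on $\P(F_j)$. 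By uniqueness of the $\Pi$-invariant measure supported on $\P(F_j)$ it equals $\nu_j$, whence $\nu=\sum_{j}p_j\nu_j$ is the desired convex combination. I expect the main difficulty to lie in the second paragraph: checking that \Cref{lem:conv proj} and \Cref{lemma:polar} transfer to the $L$-dynamics and, above all, that the polar unitaries may be chosen block diagonal, which is exactly what keeps the limiting estimator $U_n^L\cdot\hat w$ inside a single block.
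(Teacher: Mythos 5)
Your argument is correct, but it is genuinely different from the paper's. The paper never shows directly that $\nu$ lives on $\bigsqcup_j\P(F_j)$: it exploits the fact that, by \Cref{lemma:polar} and invariance, $\ee_\nu(f)=\lim_n\ee_\nu\big(f(U_n\cdot\hat z)\big)$ for continuous $f$, and that $U_n\cdot\hat z$ is $\outalg$-measurable, so this limit depends on $\nu$ only through the marginal $\pp^{\rho_\nu}$ (\Cref{prop:marginal}). Writing the fixed point $\rho_\nu=t_1\rho_1\oplus\cdots\oplus t_d\rho_d\oplus 0$ via \Cref{lem:phi_FP} gives $\pp^{\rho_\nu}=\sum_j t_j\pp^{\rho_{\nu_j}}$, and applying the same limit identity to each $\nu_j$ yields $\ee_\nu(f)=\sum_j t_j\ee_{\nu_j}(f)$ in a few lines, with no need to restrict the dynamics to $L$. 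Your route instead establishes the support statement first, by re-running \Cref{lem:conv proj} and \Cref{lemma:polar} for the block-diagonal restricted family $\{v|_L\}$; the points you single out as delicate are indeed the ones that need checking, and they do go through: $\supp\mu$-invariance of $L$ gives $\int(v|_L)^*(v|_L)\,\d\mu=\id_L$ and the transfer of \pur, and since $(W_n|_L)^*(W_n|_L)$ is block diagonal its square root and a choice of polar unitary are too, so the rank-one limit $\proj_{\hat w}$ and the estimators $U_n^L\cdot\hat w$ live in a single block. Combined with invariance of the law of $q_j(\hat x_n)$ this forces $q_j\in\{0,1\}$ $\nu$-a.s., and the conclusion follows from uniqueness on each $\P(F_j)$, which both proofs take from the first part of the decomposition. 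The trade-off: the paper's proof is shorter and reuses only the already-proved global statements, while yours costs a re-verification of the hypotheses for the restricted dynamics but is more geometric, producing the weights as $p_j=\nu\big(\P(F_j)\big)$ and making explicit that an invariant measure charges no superposition across blocks (a fact that, in the paper's approach, is only read off a posteriori from $\nu=\sum_j t_j\nu_j$).
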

\begin{proof}
Let $\nu$ be a $\Pi$-invariant probability measure. Let $f$ be a continuous function. From \Cref{lemma:polar},
$$\ee_\nu(f)=\lim_{n\to\infty}\ee_\nu\big(f(U_n\cdot \hat z)\big).$$
\Cref{prop:marginal} implies
$$\ee_\nu(f)=\lim_{n\to\infty}\ee^{\rho_\nu}\big(f(U_n\cdot \hat z)\big)$$
with $\rho_\nu\in\mathcal D_k$ a fixed point of $\phi$. By \Cref{lem:phi_FP}, \pur\ implies that there exist non negative numbers $t_1,\ldots,t_d$ summing up to one such that $\rho_\nu=t_1\rho_1\oplus\cdots\oplus t_d\rho_d\oplus 0_{M_D(\cc)}$. From the definition of $\pp^{\rho_\nu}$,
$$\pp^{\rho_\nu}=t_1\pp^{\rho_1}+\cdots+t_d\pp^{\rho_d}$$
where we used the abuse of notation $\rho_j\equiv 0\oplus\cdots\oplus \rho_j\oplus\cdots\oplus 0$. Using \Cref{prop:marginal}, it follows that,
$$\ee_\nu(f)=\lim_{n\to\infty} t_1\ee_{\nu_1}(f(U_n\cdot \hat z))+\cdots+t_d\ee_{\nu_d}(f((U_n\cdot \hat z))).$$
Then \Cref{lemma:polar} and the $\Pi$-invariance of each measure $\nu_j$ yield the proposition.
\end{proof}

\section{Products of special unitary matrices}\label{app:SU(k)}
\begin{proposition}\label{prop:SU(k)}
Assume $\supp\mu\subset SU(k)$. Let $G$ be the smallest closed subgroup of $SU(k)$ such that $\supp\mu\subset G$. For any $\hat x\in\sta$, let $[\hat x]_G$ be the orbit of $\hat x$ with respect to $G$ and the action $G\times \sta\ni (v,\hat x)\mapsto v\cdot \hat x$. Namely, $[\hat x]_G:=\{\hat y\in\sta \ |\ \exists v\in G \mbox{ s.t. } \hat y=v\cdot \hat x\}$. Then, for any $\hat x$, there exists a unique $\Pi$-invariant probability measure supported on $[\hat x]_G$, and this unique invariant measure is uniform in the sense that for any $v\in G$ it is invariant by the map $\hat x\mapsto v\cdot \hat x$.
\end{proposition}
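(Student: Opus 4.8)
The plan is to exploit the fact that under $\supp\mu\subset SU(k)$ the kernel $\Pi$ collapses to an ordinary random walk on a compact group, and then to invoke classical facts about such walks. First I would observe that since every $v\in\supp\mu$ is unitary, $\|vx\|=\|x\|$ for all unit vectors $x$, so the weight in \eqref{eq_deftranskernel} is identically one and
$$\Pi(\hat x,S)=\int_{SU(k)}\ind_S(v\cdot\hat x)\,\d\mu(v);$$
moreover \eqref{eq:stochastic family} forces $\mu$ to be a probability measure on $SU(k)$ whose support generates $G$. Thus $\Pi$ is the Markov kernel of the $\mu$-random walk $\hat x\mapsto V\cdot\hat x$, and a probability measure $\sigma$ is $\Pi$-invariant exactly when $\mu*\sigma=\sigma$, where $\mu*\sigma$ denotes the image of $\mu\otimes\sigma$ under $(v,\hat y)\mapsto v\cdot\hat y$. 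Since $G$ is closed, hence compact, in $SU(k)$ and the orbit map $o_{\hat x}:v\mapsto v\cdot\hat x$ is continuous, the orbit $[\hat x]_G=o_{\hat x}(G)$ is a compact homogeneous $G$-space on which the walk remains.

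For existence and the uniformity statement, I would take $\nu_{\hat x}:=(o_{\hat x})_*m_G$, the push-forward of the normalized Haar measure $m_G$ of $G$. Left-invariance of $m_G$ gives $w_*\nu_{\hat x}=\nu_{\hat x}$ for every $w\in G$, which is precisely the asserted invariance under $\hat y\mapsto w\cdot\hat y$; in particular $v_*\nu_{\hat x}=\nu_{\hat x}$ for every $v\in\supp\mu$, whence $\mu*\nu_{\hat x}=\int v_*\nu_{\hat x}\,\d\mu(v)=\nu_{\hat x}$ because $\mu$ is a probability measure. So $\nu_{\hat x}$ is $\Pi$-invariant and supported on $[\hat x]_G$.

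The core of the argument is uniqueness. Let $\sigma$ be any $\Pi$-invariant probability measure supported on $[\hat x]_G$; then $\mu*\sigma=\sigma$ and, by associativity of the action, $\mu^{*j}*\sigma=\sigma$ for all $j\geq0$, so that $\big(\frac1n\sum_{j=0}^{n-1}\mu^{*j}\big)*\sigma=\sigma$ for every $n$. At this point I would use the classical fact that on a compact group the Cesàro averages $\frac1n\sum_{j=0}^{n-1}\mu^{*j}$ converge weakly to $m_G$ whenever $\supp\mu$ generates $G$: any weak-$*$ limit point is an idempotent probability measure on $G$, hence by the Kawada--Itô theorem the Haar measure of the closed subgroup generated by $\supp\mu$, namely $G$ itself. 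Passing to the limit via weak-$*$ continuity of the action gives $\sigma=m_G*\sigma$, and $m_G*\sigma$ is $G$-invariant by left-invariance of $m_G$. Finally, a $G$-invariant probability measure on the transitive $G$-space $[\hat x]_G$ is unique: averaging $\int f\,\d\sigma=\int f(w\cdot\,\cdot)\,\d\sigma$ over $w\in G$ against $m_G$ and using $p=u\cdot\hat x$ shows $\int f\,\d\sigma=\int_G f(w\cdot\hat x)\,\d m_G(w)=\int f\,\d\nu_{\hat x}$ for every continuous $f$, so $\sigma=\nu_{\hat x}$.

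The only genuinely nontrivial ingredient is this last convergence of the Cesàro averages of $\mu^{*j}$ to Haar measure — equivalently, the statement that a $\mu$-stationary measure on a compact homogeneous space is automatically $G$-invariant when $\supp\mu$ generates $G$. Everything else is bookkeeping with push-forwards and the invariance properties of Haar measure, so the hard part will be making the idempotent/limit step precise (via the Kawada--Itô classification of idempotent measures on compact groups, or equivalently via Peter--Weyl applied to $\pi(\mu)=\int\pi(v)\,\d\mu(v)$ for each irreducible representation $\pi$).
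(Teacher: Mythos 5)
Your proposal is correct and follows essentially the same route as the paper's proof: reduce to the $\mu$-random walk on the compact group $G$ (the weights being identically one), establish that the Ces\`aro averages of the walk equidistribute towards the normalized Haar measure $\mu_G$ of $G$, and push forward along the orbit map $v\mapsto v\cdot\hat x$. The only difference is how that equidistribution is justified --- you invoke the Kawada--It\^o classification of idempotent measures, while the paper cites the uniqueness of the invariant measure of the left-multiplication kernel $P_\mu$ on $G$ together with Prokhorov's theorem --- which is an inessential variation on the same classical fact.
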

\begin{corollary}
With the assumption and definitions of the last proposition, if $G=SU(k)$, $\Pi$ has a unique invariant probability measure and this probability is the uniform one on $\sta$.
\end{corollary}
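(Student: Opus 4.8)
The plan is to read off the corollary from \Cref{prop:SU(k)}, the only extra ingredient being that $SU(k)$ acts transitively on the projective space $\sta$. First I would record this transitivity. For $k=1$ the space $\sta$ is a single point and every statement is trivial, so assume $k\geq 2$. Given $\hat x,\hat y\in\sta$ with unit representatives $x,y$, I would pick $U\in U(k)$ with $Ux=y$ (extend $x$ and $y$ to orthonormal bases and map one to the other), write $\det U=e^{i\theta}$, and then, in an orthonormal basis whose first vector is $x$, let $D$ be the diagonal unitary equal to $1$ on $x$ and to $e^{-i\theta}$ on a second basis vector. Then $Dx=x$, $\det(UD)=1$, and $(UD)x=y$, so $UD\in SU(k)$ and $UD\cdot\hat x=\hat y$. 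Hence $G=SU(k)$ acts transitively, and therefore $[\hat x]_G=\sta$ for every $\hat x\in\sta$.

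With this in hand I would invoke \Cref{prop:SU(k)}. Since every orbit equals the whole space, \emph{any} probability measure on $\sta$ is trivially supported on $[\hat x]_G=\sta$; the proposition then provides a unique $\Pi$-invariant probability measure supported on $[\hat x]_G$, and because the support restriction is now vacuous this is exactly the assertion that $\Pi$ has a unique invariant probability measure among all probability measures on $\sta$. Denote it by $\nu$. This proves the uniqueness half of the corollary.

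It remains to identify $\nu$ with the uniform measure. \Cref{prop:SU(k)} guarantees that $\nu$ is $G=SU(k)$-invariant, i.e. invariant under $\hat x\mapsto v\cdot\hat x$ for every $v\in SU(k)$. Because the action is transitive, $\sta$ is a compact homogeneous space $SU(k)/H$, where $H$ is the stabiliser of a fixed point; on such a space there is a unique invariant probability measure, namely the image of the normalized Haar measure of $SU(k)$ under the orbit map. This image is precisely the Fubini--Study measure, equivalently the pushforward to $\sta$ of the uniform (rotation-invariant) measure on the unit sphere of $\cc^k$. Hence $\nu$ is the uniform measure, which completes the proof.

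Granting \Cref{prop:SU(k)}, the argument has no genuinely hard step: the two points that require care are the transitivity of $SU(k)$ on $\sta$ --- in particular the determinant correction that keeps the matrix in $SU(k)$, together with the degenerate case $k=1$ --- and the classical uniqueness of an invariant probability measure on a compact homogeneous space, which is exactly what promotes the bare $G$-invariance supplied by the proposition to the identification with the uniform measure.
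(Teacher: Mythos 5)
Your proposal is correct and follows essentially the same route as the paper, which dismisses the corollary as a trivial consequence of the implication $G=SU(k)\implies [\hat x]_G=\sta$ for all $\hat x$, combined with \Cref{prop:SU(k)}. You merely spell out the two details the paper leaves implicit --- the transitivity of the $SU(k)$-action on $\sta$ and the identification of the unique $G$-invariant measure with the pushforward of Haar measure, the latter of which is in fact established explicitly inside the paper's proof of the proposition itself.
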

\begin{proof}
The Corollary being a trivial consequence of $G=SU(k)\implies [\hat x]_G=\sta \ \forall \hat x\in \sta$, we are left with proving the Proposition.

Let $P_\mu$ be the Markov kernel on $G$ defined by the left multiplication: $P_\mu f(v)=\int_G f(uv)d\mu(u)$. Since $G$ is compact as a closed subset of $SU(k)$, following \cite[Proposition 4.8.1, Theorem 4.8.2]{Ap14}, the unique $P_\mu$-invariant probability measure $\mu_G$ on $G$ is the normalized Haar measure on $G$. Since $G$ is compact, Prokhorov’s Theorem implies that for any $u\in G$,
\begin{equation}\label{eq:SU(k)_weak}
\lim_{n\to\infty}\frac1n\sum_{k=1}^n\delta_uP_\mu^k=\mu_G\quad\mbox{weakly.}
\end{equation}
Let $\hat x\in\sta$. Since $\supp\mu\subset G$, for any $\hat y\in[\hat x]_G$, $\Pi(\hat y, [\hat x]_G)=1$. Then, $[\hat x]_G$ being compact, there exists a $\Pi$-invariant measure $\nu$ supported on $[\hat x]_G$. 

Let $f$ be a continuous function on $[\hat x]_G$. Then,
$$\nu(f)=\frac1n\sum_{k=1}^n\nu\Pi^k f=\frac1n\sum_{k=1}^n\int_{G^k\times[\hat x]_G} f(v_k \ldots v_1\cdot\hat y)d\mu\pt k(v_1,\ldots,v_k)d\nu(\hat y).$$
For each $\hat y\in[\hat x]_G$ let $u_y\in G$ be s.t. $\hat y=u_y\cdot\hat x$. The map $v\mapsto vu_y\cdot \hat x$ being continuous, setting $u=u_y$, the weak convergence \eqref{eq:SU(k)_weak} and Lebesgue's dominated convergence theorem imply,
$$\nu(f)=\int_{G}f(v\cdot \hat x) \d\mu_G(v).$$
It follows that $\nu$ is the image measure of $\mu_G$ by the application $v\mapsto v\cdot \hat x$. The left multiplication invariance of the Haar measure $\mu_G$ yields the invariance of $\nu$ by the map $\hat x\mapsto v\cdot\hat x$ for any $v\in G$.
\end{proof}
\begin{example}
Let $\mu=\frac12(\delta_{v_1}+\delta_{v_2})$ with,
$$v_1=\begin{pmatrix}
e^i&0\\0& e^{-i}
\end{pmatrix}\quad\mbox{and}\quad v_2=\begin{pmatrix}
\cos 1&i\sin 1\\ i\sin 1 &\cos 1
\end{pmatrix}.$$
Then $G=SU(2)$ and the uniform measure on $\P(\cc^2)$ is the unique $\Pi$-invariant probability measure.
\end{example}
\begin{proof}
Following \Cref{prop:SU(k)}, it is sufficient to prove that any element of $SU(2)$ is the limit of a sequence of products of $v_1$ and $v_2$.

Let $\sigma_1,\sigma_2,\sigma_3$ be the usual Pauli matrices:
$$\sigma_1:=\begin{pmatrix}
0&1\\1&0
\end{pmatrix},\quad \sigma_2:=\begin{pmatrix}
0&-i\\i&0
\end{pmatrix}\quad\mbox{and}\quad
\sigma_3=\begin{pmatrix}
1&0\\0&-1
\end{pmatrix}.$$
The Pauli matrices being generators of $SU(2)$ in its fundamental representation, for any $u\in SU(2)$, there exist three reals $\theta_1,\theta_2,\theta_3\in \rr$ s.t.,
$$u=\exp(i(\theta_1\sigma_1+\theta_2\sigma_2+\theta_3\sigma_3)).$$
Especially, $v_1=\exp(i\sigma_3)$ and $v_2=\exp(i\sigma_1)$. Since for any $i=1,2,3$, $\exp(i\theta_i\sigma_i)=\exp(i(\theta_i+2\pi)\sigma_i)$, taking limits of sequences of powers of $v_1$ or $v_2$, for any $\theta\in\rr$, both
$$e^{i\theta\sigma_1}\quad\mbox{and}\quad e^{i\theta\sigma_3}$$
are elements of $G$. It remains to show that any $u\in SU(k)$ is a product of elements equal to $\exp(i\theta\sigma_1)$ or $\exp(i\theta\sigma_3)$ with $\theta$ real.

Fix $(\theta_1,\theta_2,\theta_3)\in\rr^3$. Then using spherical coordinates in $\rr^3$, there exist $r\in\rr_+$, $\theta\in[0,\pi]$ and $\varphi\in[0,2\pi[$ such that $\theta_1=r\cos\theta$, $\theta_2=r\sin\theta\cos\varphi$ and $\theta_3=r\sin\theta\sin\varphi$. Then by direct computation,
$$e^{i(\theta_1\sigma_1+\theta_2\sigma_2+\theta_3\sigma_3)}=e^{-i\frac{\varphi}{2}\sigma_1}e^{i\frac{\theta}{2}\sigma_3}e^{ir\sigma_1}e^{-i\frac{\theta}{2}\sigma_3}e^{i\frac{\varphi}{2}\sigma_1}.$$
It follows that as a product of elements of $G$, $e^{i(\theta_1\sigma_1+\theta_2\sigma_2+\theta_3\sigma_3)}\in G$, hence $G=SU(2)$ and the example holds.
\end{proof}

\begin{example}
Let $\mu=\frac12(\delta_{v_1}+\delta_{v_2})$ with,
$$v_1=\begin{pmatrix}
i&0\\0& -i
\end{pmatrix}\quad\mbox{and}\quad v_2=\begin{pmatrix}
0&i\\ i &0
\end{pmatrix}.$$
Then $G=\{\pm \id_{\cc^2}, \pm v_1, \pm v_2, \pm v_1v_2\}$. For $z\in\cc$, let $e_z=(1,z)^\mathsf{T}$ and $e_\infty=(0,1)^\mathsf{T}$. With the conventions $\infty^{-1}=0$, $0^{-1}=\infty$ and $-\infty=\infty$, for any $z\in\cc\cup\{\infty\}$, $[\hat e_z]_G=\{\hat e_z, \hat e_{z^{-1}}, \hat e_{-z}, \hat e_{-z^{-1}}\}$ and the measure $\frac14(\delta_{\hat e_z}+\delta_{\hat e_{-z}}+\delta_{\hat e_{z^{-1}}}+\delta_{\hat e_{-z^{-1}}})$ is a $\Pi$-invariant probability measure.
\end{example}
The proof of this example is obtained by explicit computation. 
\bibliography{QTraj}
\bibliographystyle{abbrv}
\end{document}